\def \N {\mathbb{N}}
\def \vep {\varepsilon}
\theoremstyle{plain}% default
\newtheorem{theorem}{Theorem}[section]
\newtheorem{lemma}[theorem]{Lemma}
\newtheorem{proposition}[theorem]{Proposition}
\newtheorem{maintheorem}{Theorem}
\newtheorem{maincorollary}{Corollary}
\newtheorem{remark}[theorem]{Remark}
\numberwithin{equation}{section}
\begin{document}

\title[Historic behavior and discontinuous first integrals]{Genericity of historic behavior \\ for maps and flows}

\author[M. Carvalho]{Maria Carvalho}
\address{CMUP \& Departamento de Matem\'atica, Faculdade de Ci\^encias da Universidade do Porto \\
Rua do Campo Alegre 687, 4169-007 Porto, Portugal.}
\email{mpcarval@fc.up.pt}

\author[P. Varandas]{Paulo Varandas}
\address{ CMUP, Universidade do Porto \& Departamento de Matem\'atica, Universidade Federal da Bahia\\
Av. Ademar de Barros s/n, 40170-110 Salvador, Brazil}
\email{paulo.varandas@ufba.br}

%\thanks{}

\subjclass[2010]{Primary: 37A30, 37C10, 37C40, 37D25, 37D30.}
%37A30 % Ergodic theorems
%37B10 % Symbolic dynamics
%37B40 % Topological entropy
%37C10 % Vector fields, flows, ordinary differential equations
%37C35 % Orbit growth
%37D25 % Nonuniformly hyperbolic systems (Lyapunov exponents, Pesin theory, etc.)
%37C40 % Smooth ergodic theory, invariant measures
%37C50 % Approximate trajectories (pseudotrajectories, shadowing, etc.)
%37C27 % Periodic orbits of vector fields and flows
%37D30 % Partially hyperbolic systems and dominated splittings
%37C45 %Dimension theory of dynamical systems
%37D05 %Hyperbolic orbits and sets
%37D20 %Uniformly hyperbolic systems (expanding, Anosov, Axiom A, etc.)

\keywords{Irregular set; First integral; Minimal map; Partially hyperbolic system; Homoclinic class; Sub-additive sequence; Linear cocycle.}

\maketitle

\begin{abstract}
We establish a sufficient condition for a continuous map, acting on a compact metric space, to have a Baire residual set of points exhibiting historic behavior (also known as irregular points). This criterion applies, for instance, to a minimal and non-uniquely ergodic map; to maps preserving two distinct probability measures with full support; to non-trivial homoclinic classes; to some non-uniformly expanding maps; and to partially hyperbolic diffeomorphisms with two periodic points whose stable manifolds are dense, including Ma\~n\'e and Shub examples of robustly transitive diffeomorphisms. This way, our unifying approach recovers a collection of known deep theorems on the genericity of the irregular set, for both additive and sub-additive potentials, and also provides a number of new applications.
\end{abstract}

\section{Introduction}

Let $f:\,(X, \mathcal{A}) \to (X, \mathcal{A})$ be a measurable transformation and $\mu$ be an $f$-invariant ergodic probability measure on the $\sigma$-algebra $\mathcal{A}$. For a measurable function $\varphi:\,X \to \mathbb{R}$ and $x \in X$, the sequence of Birkhoff averages of $\varphi$ at $x$ is given by $\big(1/n\,\sum_{j=0}^{n-1}\, \varphi(f^j(x))\big)_{n \in \mathbb{N}}$. A point $x \in X$ is said to be $\varphi$-regular if the limit of this sequence exists; otherwise $x$ is called a $\varphi$-irregular point (and said to have historic behavior \cite{Ruelle, Takens}). Birkhoff's ergodic theorem asserts that the time averages of $\varphi$ at $\mu$-almost every point in $X$ converge to the space average $\int_X \varphi \, d\mu$. So the set of $\varphi$-regular points carries full $\mu$ measure. This result supports Boltzman ergodic hypothesis but fails to describe the behavior and the complexity of the set of points at which the sequence of Birkhoff averages has no limit. Nowadays there is a well established theory to assess how big is the \emph{irregular set} (also called the set of points with historic behavior): contrary to the previous measure theoretical description, the set of these non-typical points may be Baire generic and, moreover, have full topological pressure, full Hausdorff dimension or full metric mean dimension, as attested in
%\cite{BLV, BS, DOT, LV18, PS07, S, TV17, Th10}
\cite{AP19,BLV, BS, LV18, PS07, TV17} and references therein.

\smallskip

Not surprisingly, the research addressing these phenomena started in the realm of uniform hyperbolicity. On the one hand, the existence of many periodic points ensures that there exist continuous observable maps with distinct spaces averages. On the other hand, a hyperbolic structure, even if non-uniform, comes up with a panoply of means to reconstruct true orbits from %pseudo ones, or from
finite pieces of orbits: specification, gluing orbit property and non-uniform versions of these. Yet, these properties are seldom valid for strong partially hyperbolic transitive diffeomorphisms (see \cite{BTV21,SVY} and references therein). 
% shadowing lemma is not valid for strong partially hyperbolic transitive diffeomorphisms in dimension three exhibiting periodic orbits with different index \cite{BDT2000}; besides, the specification property does not hold in this setting \cite{SVY}.
Furthermore, according to \cite{Sun}, no minimal and positive entropy homeomorphism has the gluing orbit property, which is one of the weakest versions on the aforementioned chain of concepts. This may explain the scarcity of results, as far as we know, regarding the irregular set for minimal dynamics.

\smallskip

One may think of minimal and uniquely ergodic dynamics as the natural opposites to hyperbolic dynamics, with much lower level of complexity. This impression is somehow reinforced by the fact that minimal dynamics admit very simple Rohklin towers (see e.g. \cite[Lemma 6]{AB}), and by the uniform convergence of Birkhoff averages in the case of uniquely ergodic transformations. Notwithstanding, examples of volume preserving analytic diffeomorphisms on $\mathbb T^2$ with zero entropy, which are minimal though not uniquely ergodic (due to Furstenberg \cite{Furstenberg}), or minimal homeomorphisms of the torus with positive entropy (constructed by Herman and Rees \cite{H81, R81}) show that the setting is surprisingly rich.
%More recently, B\'eguin \emph{et al} \cite{BCL} proved that any compact manifold of dimension at least two which carries a minimal uniquely ergodic homeomorphism also carries a minimal uniquely ergodic homeomorphism with positive topological entropy.

\smallskip

In this paper we present a simple criterion to ensure that the set of points with historic behavior, for maps or flows, is a Baire generic subset of the ambient space, both for additive and sub-additive sequences. The main results are presented in Section~\ref{se:main-results}. Their proofs are a consequence of the existence of an everywhere discontinuous first integral and of a general statement, we will show on Section~\ref{se:teo-p}, regarding the accumulation points of sequences of continuous observable functions.
%The argument, which relies on the existence of such discontinuous first integrals and provides new applications,
This reasoning provides a unified approach to several contexts where this kind of result has already been established, besides bringing forward new applications. Indeed, our assumptions are satisfied by a vast class of discrete and continuous time dynamics, including minimal non-uniquely ergodic homeomorphisms, non-trivial homoclinic classes, continuous maps with the specification property, Viana maps, partially hyperbolic diffeomorphisms and singular-hyperbolic attractors (cf. Corollaries \ref{main-b} -- \ref{main-g}). In the broader context of sub-additive sequences of potentials, we show how to describe more accurately the irregular points for Lyapunov exponents of linear cocycles, as well as those points for which the convergence in the Brin-Katok formula for the metric entropy of weak Gibbs measures fails. Therefore, we establish or improve a wide range of results on the genericity of the irregular set, including \cite{BLV, BKNRS, Y2020}, as an outcome of an easy criterion.

%In what follows, we prove that the irregular set of any continuous minimal non-uniquely ergodic map on a compact metric space is Baire residual. The proof is a consequence of the existence of a discontinuous first integral and of a general statement, we will present on Section~\ref{se:teo-p}, regarding the accumulation points of sequences of continuous observable functions. We also show that a similar conclusion is valid within a significant class of partially hyperbolic diffeomorphisms (cf. Section~\ref{proof3}) and of flows (cf. Section~\ref{se:flows}). {Therefore, we establish or upgrade a wide range of results on the genericity of the irregular set as an outcome of an easy criterion we provide on Section~\ref{se:teo-p}. Finally, we adapt this criterion to deal with the broader context of sub-additive sequences of potentials.}

\medskip

\section{Main results}\label{se:main-results}

Given a compact metric space $(X,d)$ and a continuous map $f: X \to X$, the set of irregular points is defined by
\begin{equation}\label{eq:IG}
\mathfrak I = \Big\{x \in X  \colon \,\lim_{n \,\to \,+\infty} \,\frac1n\sum_{j=0}^{n-1} \,\delta_{f^j(x)} \quad \mbox{does not exist in the weak$^*$ topology}\, \Big\}.
\end{equation}
For each continuous function $\varphi : X  \to \mathbb R$, the set of $\varphi$-irregular points is given by
\begin{equation}\label{eq:IF}
\mathfrak I_\varphi = \Big\{x \in X  \colon\, \Big(\frac1n\,\sum_{j=0}^{n-1} \,\varphi({f^j(x)})\Big)_{n \, \in \, \mathbb{N}} \quad \mbox{does not converge}\, \Big\}.
\end{equation}
Notice that, as $\varphi$ is bounded, the sequence of Birkhoff averages of $\varphi$ at $x$ does not converge if and only if it has no limit. Moreover, $\mathfrak I_\varphi \subset \mathfrak I$. Associated to $\varphi$, consider the map
\begin{equation}\label{eq:function L}
x \in X \quad \mapsto \quad L_\varphi(x)=\limsup_{n\,\to\,+ \infty} \,\frac1n \,\sum_{j=0}^{n-1} \,\varphi(f^j (x)).
\end{equation}
Observe that $L_\varphi$ is $f$-invariant, that is, $L_\varphi \circ f = f$. Thus, $L_\varphi$ is a so called first integral for $f$; for more information regarding smooth first integrals for endomorphisms, see \cite{Hurley86, Mane73}.

\smallskip

As a particular case of a more general statement we prove on Section~\ref{se:teo-p}, we will show that the existence of a non-trivial and discontinuous first integral $L_\varphi$ conveys a topologically large set of irregular points. To the best of our knowledge, this sufficient condition has not appeared before in the literature, so it primarily provides a new criterion for genericity of irregular sets.

\begin{maintheorem}\label{thm:main}
Let $(X,d)$ be a compact metric space, $f: X \to X$ be a continuous map and $\varphi: X \to \mathbb{R}$ be a continuous observable. Assume that there exist two
%$f$-invariant
dense subsets $\mathcal{A}$ and $\mathcal{B}$ of $X$ such that the restrictions of $L_\varphi$ to $\mathcal{A}$ and to $\mathcal{B}$ are constant, though the value at $\mathcal{A}$ is different from the one at $\mathcal{B}$. Then $\mathfrak I_\varphi$ is a Baire residual subset of $X$.
\end{maintheorem}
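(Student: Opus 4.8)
The plan is to exhibit $\mathfrak I_\varphi$ as a countable intersection of dense open sets. Write $a$ for the value of $L_\varphi$ on $\mathcal A$ and $b$ for the value on $\mathcal B$, and assume without loss of generality $a<b$. Pick rationals $\alpha<\beta$ with $a<\alpha<\beta<b$. For a point $x$ to be $\varphi$-irregular it suffices that
\[
\liminf_{n\to\infty}\frac1n\sum_{j=0}^{n-1}\varphi(f^j(x))\;\le\;\alpha
\quad\text{and}\quad
\limsup_{n\to\infty}\frac1n\sum_{j=0}^{n-1}\varphi(f^j(x))\;\ge\;\beta .
\]
So I would set, for $n\in\mathbb N$,
\[
U_n=\Big\{x\in X:\ \tfrac1m\textstyle\sum_{j=0}^{m-1}\varphi(f^j(x))>\beta\ \text{ for some }m\ge n\Big\},\qquad
V_n=\Big\{x\in X:\ \tfrac1m\textstyle\sum_{j=0}^{m-1}\varphi(f^j(x))<\alpha\ \text{ for some }m\ge n\Big\},
\]
and show that $\bigcap_{n}(U_n\cap V_n)\subset\mathfrak I_\varphi$ while each $U_n$ and each $V_n$ is open and dense; Baire's theorem then finishes the argument. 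Openness is immediate because $x\mapsto \frac1m\sum_{j=0}^{m-1}\varphi(f^j(x))$ is continuous (finite sum of compositions of continuous maps) and the defining condition is a strict inequality for a single $m$, which persists on a neighborhood.

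The crux is density. Fix $n$ and an arbitrary nonempty open set $W\subset X$; I must find $x\in W$ and $m\ge n$ with $\frac1m\sum_{j=0}^{m-1}\varphi(f^j(x))>\beta$ (and symmetrically for $V_n$, using $\mathcal A$ and $\alpha$). Since $\mathcal B$ is dense, choose $y\in W\cap\mathcal B$; then $L_\varphi(y)=\limsup_{m}\frac1m\sum_{j=0}^{m-1}\varphi(f^j(y))=b>\beta$, so there are infinitely many $m$ — in particular some $m\ge n$ — with $\frac1m\sum_{j=0}^{m-1}\varphi(f^j(y))>\beta$. By continuity of this Birkhoff average (for that fixed $m$), the same strict inequality holds on a whole neighborhood of $y$, and shrinking that neighborhood to lie inside $W$ we conclude $W\cap U_n\neq\emptyset$. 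The symmetric argument with $z\in W\cap\mathcal A$, using $\liminf_m\frac1m\sum\varphi(f^j(z))\le L_\varphi(z)=a<\alpha$, gives a suitable $m'\ge n$ with $\frac1{m'}\sum_{j=0}^{m'-1}\varphi(f^j(z))<\alpha$ and hence $W\cap V_n\neq\emptyset$.

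Finally, for the inclusion: if $x\in\bigcap_n U_n$ then for every $n$ there is $m\ge n$ with the Birkhoff average at scale $m$ exceeding $\beta$, so $\limsup_m\frac1m\sum_{j=0}^{m-1}\varphi(f^j(x))\ge\beta$; likewise $x\in\bigcap_n V_n$ forces $\liminf_m\frac1m\sum_{j=0}^{m-1}\varphi(f^j(x))\le\alpha<\beta$, so the Birkhoff averages of $\varphi$ at $x$ do not converge, i.e. $x\in\mathfrak I_\varphi$. The main (mild) obstacle is purely bookkeeping: being careful that the "for some $m\ge n$" quantifier is what makes the sets open (a single scale), while it is the density of $\mathcal A$, $\mathcal B$ together with the $\limsup$/$\liminf$ characterization of $L_\varphi$ that delivers such a scale inside any prescribed open set. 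Note this argument in fact shows the stronger conclusion that $\mathfrak I_\varphi$ contains a dense $G_\delta$ on which $\liminf\le\alpha<\beta\le\limsup$.
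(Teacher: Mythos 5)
Your proof is correct. It differs from the paper's argument in the decomposition it uses: the paper covers the \emph{complement} of $\mathfrak I_\varphi$ by the closed sets
$\Lambda_N=\{x: |\frac1n\sum_{j<n}\varphi(f^j(x))-\frac1m\sum_{j<m}\varphi(f^j(x))|\le\vep \ \forall m,n\ge N\}$
and proves each has empty interior via a small lemma on how $L_\varphi$ behaves along convergent sequences inside $\Lambda_N$ (which needs uniform continuity of a fixed finite Birkhoff average), whereas you directly exhibit a dense $G_\delta$ inside $\mathfrak I_\varphi$ as $\bigcap_n (U_n\cap V_n)$, with openness coming for free from the fact that a strict inequality at a single scale $m$ is an open condition, and density coming from picking points of $\mathcal B$ (resp.\ $\mathcal A$) in a prescribed open set and using $\limsup = b>\beta$ (resp.\ $\limsup=a<\alpha$, hence $\liminf\le a<\alpha$). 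Both routes rest on exactly the same two ingredients --- continuity of finite Birkhoff sums and the density of $\mathcal A,\mathcal B$ on which $L_\varphi$ takes two distinct constant values --- and both yield the quantitative refinement that on a residual set the oscillation of the averages is at least $\beta-\alpha$. Your version is slightly more economical (no analogue of the paper's Lemma~3.3 is needed) and is essentially the alternative argument the authors themselves sketch, for measures in place of the sets $\mathcal A$ and $\mathcal B$, in the Remark following the proof of Corollary~\ref{main-b}; the paper's formulation via $U_\psi=\limsup_n\psi_n$ has the advantage of transferring verbatim to arbitrary sequences $(\psi_n)$ and hence to the sub-additive setting of Theorem~\ref{thm:mainB}, which your argument also handles with no change.
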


%L(.) é uma integral primeira, ou seja, constante ao longo de órbitas. O Mañé estudou integrais primeiras (BBMS 1973), e também tem contribuições de Hurley (PAMS 1986) e Obata, Leguil, Santiago (preprint centralizador de fluxos). %No caso de difeos, em variedades de dimensão d, as funções C^r-genéricas (r>=1) não têm integrais primeiras de classe C^d que não triviais. O que temos aqui é que a hipótese de integral primeira não trivial e distinta em %invariantes densos (logo descontínua) causa a criação de Baire genérico de irregulares.

\smallskip

Let us now list a few applications of Theorem~\ref{thm:main} in a variety of settings. Given a measurable map $f: X \to X$ and an $f$-invariant probability measure $\mu$, the ergodic basin of attraction of $\mu$ is the set of points $x \in X$ such that $\lim_{n \,\to \,+\infty} \,\frac1n\,\sum_{j=0}^{n-1} \,\delta_{f^j(x)} = \mu$ in the weak$^*$ topology. A first consequence of Theorem~\ref{thm:main} is the following result, whose main assumption is fulfilled, for instance, whenever $f$ preserves two distinct probability measures with full support.
%{(as happens, for example, if $f$ is conservative and has a periodic point)}.

\begin{maincorollary}\label{main-b}
Let $(X,d)$ be a compact metric space and $f: X \to X$ be a continuous map preserving two distinct Borel probability measures with dense ergodic basins. Then $\mathfrak I$
%\begin{equation}\label{eq:IG}
%\mathfrak I = \Big\{x \in X  \colon \,\lim_{n \,\to \,+\infty} \,\frac1n\sum_{j=0}^{n-1} \,\delta_{f^j(x)} \quad \mbox{does not exist in the weak$^*$ topology}\, \Big\}
%\end{equation}
is a Baire residual subset of $X$.
\end{maincorollary}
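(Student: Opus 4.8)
The plan is to obtain Corollary~\ref{main-b} as a direct application of Theorem~\ref{thm:main}; the only thing one needs to supply is a single continuous observable to which that theorem applies. Denote by $\mu_1 \neq \mu_2$ the two $f$-invariant Borel probability measures given by the hypothesis, and let $\mathcal{A}$ and $\mathcal{B}$ be their respective ergodic basins of attraction, which are dense in $X$ by assumption. Since distinct Borel probability measures on a compact metric space are separated by continuous functions — equivalently, the weak$^*$ topology on the space of probability measures is Hausdorff, which is exactly the statement that $C(X)$ separates measures through the Riesz representation theorem — we may fix a continuous $\varphi : X \to \R$ with $\int_X \varphi \, d\mu_1 \neq \int_X \varphi \, d\mu_2$.

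Next I would check that $L_\varphi$ is constant on $\mathcal{A}$ and on $\mathcal{B}$, taking different values on the two sets. Let $x \in \mathcal{A}$. By the definition of the ergodic basin of attraction, $\frac1n \sum_{j=0}^{n-1} \delta_{f^j(x)} \to \mu_1$ in the weak$^*$ topology; testing this convergence against the continuous function $\varphi$ yields
\[
\frac1n \sum_{j=0}^{n-1} \varphi(f^j(x)) \;\longrightarrow\; \int_X \varphi \, d\mu_1 \qquad \mbox{as } n \to +\infty .
\]
In particular the sequence of Birkhoff averages of $\varphi$ at $x$ converges, so the $\limsup$ in \eqref{eq:function L} is a genuine limit and $L_\varphi(x) = \int_X \varphi \, d\mu_1$. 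The identical argument gives $L_\varphi \equiv \int_X \varphi \, d\mu_2$ on $\mathcal{B}$, and by the choice of $\varphi$ these two constants differ.

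Thus the hypotheses of Theorem~\ref{thm:main} are met with the dense subsets $\mathcal{A}$ and $\mathcal{B}$, so $\mathfrak I_\varphi$ is a Baire residual subset of $X$. Finally, since $\mathfrak I_\varphi \subset \mathfrak I$ (as observed right after \eqref{eq:IF}) and any subset of $X$ containing a residual set is itself residual, $\mathfrak I$ is Baire residual, which is the assertion of the corollary. I expect no genuine obstacle specific to this deduction: the only point meriting a moment's care is that weak$^*$ convergence of the empirical measures forces the Birkhoff averages of $\varphi$ to converge — not merely to stay bounded — so that $L_\varphi$ is literally constant on each basin; all the substance is carried by Theorem~\ref{thm:main} itself, to be proved in Section~\ref{se:teo-p}.

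For completeness I would also note that the standing hypothesis is fulfilled whenever $f$ admits two distinct \emph{ergodic} measures with full support: Birkhoff's ergodic theorem then provides, for each of them, a set of generic points of full measure, which is necessarily dense and contained in the corresponding ergodic basin, so both basins are dense and the corollary applies verbatim.
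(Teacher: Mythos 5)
Your proposal is correct and follows exactly the paper's own route: pick a continuous $\varphi$ with $\int \varphi\, d\mu_1 \neq \int \varphi\, d\mu_2$, observe that $L_\varphi$ is constant (equal to the respective space averages) on the two dense ergodic basins, and invoke Theorem~\ref{thm:main} together with $\mathfrak I_\varphi \subset \mathfrak I$. The extra checks you include (separation of measures by $C(X)$, convergence rather than mere boundedness of the Birkhoff averages on the basins, and the full-support sufficient condition) are all accurate and merely make explicit what the paper leaves implicit.
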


\smallskip

Corollary~\ref{main-b} is suited, for example, to continuous maps with the specification property or the gluing orbit property (we refer the reader to \cite{BV15} for the definitions), providing a simpler and alternative proof of the genericity of the irregular set in that setting. Indeed, if the continuous map satisfies the specification property then \cite[Propositions 21.12 and 21.14]{DGS} ensure not only that generic invariant measures are full supported but also that their basins are dense.

\begin{maincorollary}\label{cor-spec}
Let $X$ be a compact metric space and  $f : X \to X$ be a continuous map satisfying the specification property. There exists a Baire generic subset of points in $X$ with historic behavior.
\end{maincorollary}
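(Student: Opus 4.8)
The plan is to deduce Corollary~\ref{cor-spec} from Corollary~\ref{main-b} by producing, for a continuous map $f$ with the specification property, two distinct $f$-invariant Borel probability measures whose ergodic basins are both dense in $X$. Once such measures are available, Corollary~\ref{main-b} immediately yields that $\mathfrak I$ is Baire residual in $X$, and every point of $\mathfrak I$ has historic behavior by definition \eqref{eq:IG}. So the whole argument reduces to the construction of these two measures.

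The key step is to invoke the classical structure theory for maps with specification, as developed in Denker--Grillenberger--Sigmund. First I would note that specification forces the space of $f$-invariant probability measures $\mathcal{M}_f(X)$ to be nontrivial and in fact to have no isolated points, so $X$ supports at least two distinct invariant measures (unless $X$ is a single periodic orbit, a degenerate case one excludes or handles separately since then $\mathfrak I = \emptyset$ but the statement is about the generic case; more carefully, specification on a space that is not a single periodic orbit gives positive entropy or at least infinitely many periodic orbits, hence many invariant measures). Then, as indicated in the excerpt, \cite[Propositions 21.12 and 21.14]{DGS} state that there is a residual subset $\mathcal{R} \subset \mathcal{M}_f(X)$ (in the weak$^*$ topology) such that every $\mu \in \mathcal{R}$ has full support and, crucially, a dense ergodic basin of attraction. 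Picking any two distinct elements $\mu_1, \mu_2 \in \mathcal{R}$ — possible because $\mathcal{M}_f(X)$ is a perfect set and $\mathcal{R}$ is residual, hence uncountable — gives precisely the hypothesis of Corollary~\ref{main-b}.

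Concretely, the steps in order are: (i) record that specification implies $\mathcal{M}_f(X)$ is either a singleton consisting of a measure supported on a periodic orbit, or an infinite (indeed perfect, uncountable) set; in the first case $X$ is a single periodic orbit and the statement is vacuous or trivial, so assume the second; (ii) quote \cite[Propositions 21.12 and 21.14]{DGS} to obtain a residual set $\mathcal{R}$ of invariant measures with full support and dense basin; (iii) choose $\mu_1 \neq \mu_2$ in $\mathcal{R}$; (iv) apply Corollary~\ref{main-b} with these two measures to conclude that $\mathfrak I$ is Baire residual, which is exactly the assertion that there is a Baire generic set of points with historic behavior.

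I expect the only real obstacle to be bookkeeping about the degenerate case where $X$ carries a unique invariant measure — which under specification means $X$ is a single periodic orbit — and making sure the cited genericity results from \cite{DGS} are applied under their correct hypotheses (specification in the sense used there, on the full compact space). All the substantive work is already encapsulated in Corollary~\ref{main-b} and in the known fact that specification yields abundant full-support invariant measures with dense basins; the proof of the corollary itself is therefore essentially a one-line reduction plus a citation.
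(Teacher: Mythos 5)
Your proposal is correct and follows exactly the route the paper takes: the text immediately preceding Corollary~\ref{cor-spec} deduces it from Corollary~\ref{main-b} by citing \cite[Propositions 21.12 and 21.14]{DGS} for the existence of (generically many) invariant measures with full support and dense ergodic basins. Your additional care with the degenerate uniquely ergodic case (a single periodic orbit) is a reasonable refinement the paper leaves implicit, but the substance of the argument is the same.
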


\smallskip

Corollary~\ref{main-b} also applies to endomorphisms with critical or singular behavior (e.g. quadratic maps, Lorenz interval maps or Viana maps) under just a few requirements. Theorem~5 in \cite[p. 928]{Pi} indicates that any strongly transitive $C^{1+\alpha}$-map on a compact Riemannian manifold with a periodic point which does not lie in the forward orbit of the critical or singular set admits an uncountable number of ergodic and full supported invariant measures. Let us illustrate this assertion by considering the robust class of multidimensional non-uniformly expanding maps with singularities known as Viana maps. These are skew-products of the type
\begin{equation}\label{eq:Viana}
\begin{array}{cccc}
f : & \mathbb S^1 \times \mathbb R & \to & \mathbb S^1 \times \mathbb R \\
	& (x,y) & \mapsto & \big(dx \,(mod \,1),\,a(x)-y^2\big)
\end{array}
\end{equation}
where $d \geqslant 2$, $a(x)=a_0+\alpha \sin(2\pi x)$ and $a_0\in (1,2)$ is chosen so that $0$ is pre-periodic for the quadratic map $h(x)=a_0-x^2$ (cf. \cite{Viana} for more details). According to  \cite[Theorem~10]{Pi}, Viana maps have an uncountable number of full supported ergodic probability measures, and so we obtain the following immediate consequence.

\begin{maincorollary}\label{main-viana}
Consider a map $f : \mathbb S^1 \times \mathbb R \to \mathbb S^1 \times \mathbb R$ as in ~\eqref{eq:Viana}. There exists a Baire generic subset of points in the attractor $\Lambda = f(\mathbb S^1 \times \mathbb R)$ with historic behavior.
\end{maincorollary}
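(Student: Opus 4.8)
The plan is to deduce this from Corollary~\ref{main-b} applied to the restriction of $f$ to the attractor. First I would note that the map $f$ in \eqref{eq:Viana}, being a polynomial skew-product, is continuous, and that the attractor $\Lambda$ is a compact, forward $f$-invariant subset of $\mathbb S^1\times\mathbb R$; hence $f|_\Lambda\colon\Lambda\to\Lambda$ is a continuous self-map of a compact metric space, to which Corollary~\ref{main-b} may be applied, provided we exhibit two distinct $f$-invariant Borel probability measures supported on $\Lambda$ with dense ergodic basins.

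The second step is to invoke \cite[Theorem~10]{Pi}, which guarantees that a Viana map carries an uncountable family of ergodic $f$-invariant probability measures, each with support equal to $\Lambda$; I would simply fix two of them, say $\mu_1\neq\mu_2$. The third step is to check that their ergodic basins $B_1,B_2$ --- the sets of points $x\in\Lambda$ for which $\frac1n\sum_{j=0}^{n-1}\delta_{f^j(x)}$ converges to $\mu_i$ in the weak$^*$ topology --- are dense in $\Lambda$. This is routine: applying Birkhoff's ergodic theorem to a countable subset of $C(\Lambda)$ that is dense in the uniform norm gives $\mu_i(B_i)=1$, and a set of full measure for a fully supported measure is automatically dense. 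This is exactly the implication --- full support forces dense ergodic basins --- recorded right after the statement of Corollary~\ref{main-b}; and since $\mu_1\neq\mu_2$, the sets $B_1$ and $B_2$ are in fact disjoint.

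Finally, Corollary~\ref{main-b} applied to $f|_\Lambda$ yields that the irregular set of $f|_\Lambda$, that is, the set of points of $\Lambda$ with historic behavior, is Baire residual in $\Lambda$, which is the assertion. I expect no genuine obstacle inside the argument: the only substantive input is \cite[Theorem~10]{Pi}, which produces the uncountable family of fully supported ergodic measures --- this in turn resting on the strong transitivity of Viana maps and on the existence of a periodic point outside the forward orbit of the critical set (cf.\ \cite[Theorem~5]{Pi}) --- while the density of the basins and the invocation of Corollary~\ref{main-b} are immediate. If one wished to avoid quoting \cite{Pi} directly, the hard part would instead be to construct two distinct fully supported ergodic measures by hand (for instance the SRB measure together with one produced by an averaging argument over a suitable periodic orbit), but given the cited result this is unnecessary.
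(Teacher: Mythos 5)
Your argument is correct and is essentially the paper's own: the paper derives this corollary directly from Corollary~\ref{main-b} by citing \cite[Theorem~10]{Pi} for the existence of (uncountably many, hence at least two) distinct fully supported ergodic measures on the attractor, whose basins are then dense by exactly the full-support argument you give. The only detail worth keeping explicit, which you correctly flag, is that one works with the compact forward-invariant attractor (obtained from a trapping region $\mathbb S^1\times I$) so that Corollary~\ref{main-b} applies to $f|_\Lambda$.
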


\smallskip

It is known that the irregular set associated to a uniquely ergodic dynamics is empty (cf. \cite[Theorem 6.19]{Walters}). Besides, some of the most interesting known examples of minimal non-uniquely ergodic homeomorphisms have zero topological entropy. Thus, in this setting, it is useful to describe the complexity of the irregular set using Baire category arguments instead of other measurements of chaos. The next consequence of Corollary~\ref{main-b} benefits precisely from this strategy.

\begin{maincorollary}\label{main-a}
Let $(X,d)$ be a compact metric space and $f: X \to X$ be a continuous minimal map. Then either there exists an $f$-invariant ergodic Borel probability measure $\mu$ such that, in the weak$^*$ topology,
$$\lim_{n \,\to \,+\infty} \frac1n\,\sum_{j=0}^{n-1} \delta_{f^j(x)}  = \mu	\quad \quad \forall \, x \in X$$
or the set $\mathfrak I$ is Baire residual in $X$.
\end{maincorollary}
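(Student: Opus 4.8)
The plan is to deduce Corollary~\ref{main-a} from Corollary~\ref{main-b} by exploiting the rigidity of minimal dynamics. Recall that the space $M_f(X)$ of $f$-invariant Borel probability measures is non-empty (Krylov--Bogolyubov) and convex-compact in the weak$^*$ topology; since $f$ is minimal, every $\mu \in M_f(X)$ has full support, because $\supp\mu$ is a non-empty closed $f$-invariant set, hence all of $X$. So the only way the hypothesis of Corollary~\ref{main-b} can fail is if $M_f(X)$ is a singleton, i.e. $f$ is uniquely ergodic; in the non-uniquely ergodic case we automatically get two distinct invariant probabilities, both fully supported, and then we need only check that their ergodic basins are dense.

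First I would dispose of the uniquely ergodic alternative. If $M_f(X) = \{\mu\}$, then $\mu$ is ergodic and, by the classical characterization of unique ergodicity (cf. \cite[Theorem 6.19]{Walters}), for every continuous $\varphi$ the Birkhoff averages $\frac1n\sum_{j=0}^{n-1}\varphi(f^j(x))$ converge \emph{uniformly} on $X$ to $\int_X \varphi\,d\mu$; testing against a countable dense family of continuous functions gives $\frac1n\sum_{j=0}^{n-1}\delta_{f^j(x)} \to \mu$ in the weak$^*$ topology, for every $x \in X$ (not merely $\mu$-a.e.). This is exactly the first alternative in the statement. Conversely, if $f$ is not uniquely ergodic, pick $\mu_1 \neq \mu_2$ in $M_f(X)$; by the ergodic decomposition we may even take them ergodic. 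Both have full support by minimality.

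Next I would verify the remaining hypothesis of Corollary~\ref{main-b}, namely that the ergodic basins $B(\mu_1)$ and $B(\mu_2)$ are dense in $X$. For an ergodic measure $\mu$, Birkhoff's theorem gives $\mu(B(\mu)) = 1$, so $B(\mu)$ is in particular non-empty; pick $x_0 \in B(\mu_1)$. Since $f$ is minimal, the forward orbit $\{f^k(x_0) : k \geq 0\}$ is dense in $X$, and every point of this orbit again lies in $B(\mu_1)$, because adding or deleting finitely many initial terms does not affect the limit of Cesàro averages of the Dirac masses. Hence $B(\mu_1)$ contains a dense set and is itself dense; the same argument applies to $\mu_2$. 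Now Corollary~\ref{main-b} yields that $\mathfrak I$ is Baire residual in $X$, which is the second alternative, and the dichotomy is established.

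I do not anticipate a serious obstacle: the argument is a short packaging of standard facts (full support under minimality, the uniform-convergence characterization of unique ergodicity, and $f$-invariance of ergodic basins under the minimal map). The only point that needs a little care is the passage from unique ergodicity to convergence of the empirical measures \emph{at every single point} rather than almost everywhere — but this is precisely where uniform convergence of Birkhoff averages, a hallmark of uniquely ergodic systems, does the work, after reducing to a countable weak$^*$-determining family of observables.
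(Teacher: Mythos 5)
Your proposal is correct and follows essentially the same route as the paper: dispose of the uniquely ergodic case via uniform convergence of Birkhoff averages, and otherwise take two distinct ergodic measures, observe that each ergodic basin contains a (forward-invariant) dense orbit by minimality, and apply Corollary~\ref{main-b}. The aside about full support of invariant measures is true but not needed, since the relevant hypothesis is density of the basins, which you verify directly just as the paper does.
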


\smallskip

Corollary~\ref{main-a} applies, for instance, to the minimal and non-uniquely ergodic homeomorphisms constructed by Furstenberg \cite{Furstenberg}. Its statement appears, though with a distinct formulation, in \cite[Proposition 6.3]{H81} and \cite[Lemma 3]{Fu97}.

\smallskip

A third consequence of Theorem~\ref{thm:main} concerns the irregular set for partially hyperbolic diffeomorphisms. Given a smooth compact manifold $X$ with dimension $\dim (X) \geqslant 2$, one says that a diffeomorphism $f\in \mathrm{Diff}^{\, 1}(X)$ is \emph{partially hyperbolic} if there exists a $Df$-invariant splitting $TX=E^s\oplus F$ and constants $C>0$ and $\lambda\in (0,1)$ such that, for every $x \in X$ and every $n \in \mathbb{N}$, one has:
\begin{enumerate}
\item[(a)] $\|Df^n(x)(v)\|\leqslant C\lambda^n\|v\|$ for every $v\in E^s_x$;
\smallskip
\item[(b)] for every pair of unitary vectors $v \in E^s_x$ and $w \in F_x$,
$$\frac{\|Df^n(x) v\|}{\|Df^n(x) w\|} \leqslant C \lambda^n.$$
\end{enumerate}
Property (a) means that the sub-bundle $E^s$ is uniformly contracting, while a splitting satisfying property (b) is called \emph{dominated}. It is well known that, under these assumptions, for every point $x\in X$ there exists a $C^1$-submanifold called stable manifold $W^s(x)$ passing at $x$ and tangent to $E^s_x$, and the collection of stable manifolds defines a stable foliation on $X$.
In several relevant examples of partially hyperbolic diffeomorphisms it is known that the stable foliation is minimal, meaning that all the stable manifolds are dense in $X$. We will establish the following variant of Theorem~\ref{thm:main} within the setting of those partially hyperbolic diffeomorphisms.

\begin{maincorollary}\label{main-c}
Let $X$ be a compact Riemannian manifold and $f: X \to X$ be a partially hyperbolic diffeomorphism with two distinct periodic points whose stable manifolds are dense in $X$. Then $\mathfrak I$
%\begin{equation}\label{eq:IG}
%\mathfrak I = \Big\{x \in X  \colon \,\lim_{n \,\to \,+\infty} \,\frac1n\sum_{j=0}^{n-1} \,\delta_{f^j(x)} \quad \mbox{does not exist in the weak$^*$ topology}\, \Big\}
%\end{equation}
is Baire residual in $X$.
\end{maincorollary}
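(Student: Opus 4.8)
The plan is to derive Corollary~\ref{main-c} directly from Theorem~\ref{thm:main}, by producing a continuous observable $\varphi\colon X\to\R$ and two dense subsets of $X$ on which the first integral $L_\varphi$ is constant, with two distinct values. Let $p$ and $q$ be the two periodic points in the hypothesis, of periods $k_p$ and $k_q$; since two periodic points on a common orbit have stable manifolds that are homeomorphic copies of one another (so simultaneously dense or not), the assumption carries content precisely when the orbits of $p$ and $q$ are distinct, and we take it this way. Let $\mu_p=\frac1{k_p}\sum_{i=0}^{k_p-1}\delta_{f^i(p)}$ and $\mu_q=\frac1{k_q}\sum_{i=0}^{k_q-1}\delta_{f^i(q)}$ be the invariant probability measures carried by these orbits; then $\mu_p\neq\mu_q$, and since continuous functions separate Borel probabilities on the compact metric space $X$, I can fix $\varphi\in C(X,\R)$ with $\mu_p(\varphi)\neq\mu_q(\varphi)$. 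The candidate dense sets are $\mathcal A\defeq W^s(p)$ and $\mathcal B\defeq W^s(q)$, dense in $X$ by hypothesis.

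The core of the argument is the claim that $L_\varphi\equiv\mu_p(\varphi)$ on $W^s(p)$ (and, symmetrically, $L_\varphi\equiv\mu_q(\varphi)$ on $W^s(q)$). By property (a) the bundle $E^s$ is uniformly contracting, so the stable manifold theorem gives, for every $x\in W^s(p)$, that $d(f^n(x),f^n(p))\to 0$ as $n\to+\infty$ (here $f^n(p)$ runs periodically through the finite orbit $\{p,f(p),\dots,f^{k_p-1}(p)\}$). Since $\varphi$ is uniformly continuous on the compact space $X$, it follows that $\varphi(f^n(x))-\varphi(f^n(p))\to 0$, and hence, by Cesàro averaging,
\[
\frac1n\sum_{j=0}^{n-1}\Big(\varphi\big(f^j(x)\big)-\varphi\big(f^j(p)\big)\Big)\ \xrightarrow[\,n\to+\infty\,]{}\ 0 .
\]
As $\frac1n\sum_{j=0}^{n-1}\varphi(f^j(p))\to\mu_p(\varphi)$, the Birkhoff averages of $\varphi$ at $x$ converge to $\mu_p(\varphi)$; in particular $L_\varphi(x)=\mu_p(\varphi)$. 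The same reasoning gives $L_\varphi\equiv\mu_q(\varphi)$ on $W^s(q)$.

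Consequently $L_\varphi$ is constant equal to $\mu_p(\varphi)$ on the dense set $\mathcal A$ and constant equal to $\mu_q(\varphi)\neq\mu_p(\varphi)$ on the dense set $\mathcal B$. Theorem~\ref{thm:main} then shows that $\mathfrak I_\varphi$ is Baire residual in $X$; since $\mathfrak I_\varphi\subset\mathfrak I$, the set $\mathfrak I$ is Baire residual as well.

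I expect the only genuinely substantive ingredient to be the evaluation of $L_\varphi$ along the stable manifolds — the elementary but essential fact that a point forward-asymptotic to a periodic orbit has Birkhoff averages governed by the periodic-orbit measure; the remainder is bookkeeping together with one invocation of Theorem~\ref{thm:main}. A technical nuisance to keep in mind is the passage from the local stable manifold provided by the stable manifold theorem to the global leaf $W^s(p)=\bigcup_{m\ge 0}f^{-m}\big(W^s_{\loc}(f^m(p))\big)$: since each $f^{-m}$ is a homeomorphism, the forward asymptotics (and hence the density) reduce to the local statement, and the Cesàro argument is unaffected.
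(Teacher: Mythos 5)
Your proof is correct and follows essentially the same route as the paper: take the stable manifolds of the two periodic orbits as the dense sets $\mathcal A$ and $\mathcal B$, observe that forward asymptoticity to a periodic orbit forces the Birkhoff averages of a continuous $\varphi$ to converge (by Ces\`aro averaging) to the periodic-orbit average, and invoke Theorem~\ref{thm:main}. The only cosmetic difference is that the paper first replaces $f$ by a power so that the points become fixed, whereas you work with the original $f$ and the periodic-orbit measures $\mu_p,\mu_q$ directly, which in fact sidesteps the small bookkeeping needed to relate irregularity for $f^k$ back to irregularity for $f$.
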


\smallskip

This corollary applies, for instance, to the open set of robustly transitive diffeomorphisms on $\mathbb{T}^4$ with minimal stable foliations constructed by Shub in \cite{Shub1971} and to Ma\~n\'e's examples \cite{Mane}. A dual statement for partially hyperbolic diffeomorphisms having a dominated splitting into center and unstable sub-bundles with a minimal unstable foliation can be proved similarly.

\smallskip

It is worth noticing that our results also allow us to deal with important classes of proper subsets that are invariant by the dynamics, as is the case of non-trivial homoclinic classes. If $X$ is a compact Riemannian manifold, the \emph{homoclinic class} $H(p,f)$ associated to a hyperbolic saddle periodic point $p$ of a diffeomorphism $f \in \text{Diff}^{\,1}(X)$ is defined as the closure of transverse intersections of the unstable and the stable manifolds of the orbit of $p$. It is known that there exists a Baire residual subset of irregular points on each non-hyperbolic homoclinic class
%containing saddles of different indices
of $C^1$-generic diffeomorphisms of $X$ (see \cite{BKNRS}). The next modification of Corollary~\ref{main-c} improves \cite{BKNRS} since it actually applies to arbitrary non-trivial homoclinic classes of any $C^1$-diffeomorphism.

\begin{maincorollary}\label{main-homoclinic}
Let $X$ be a compact Riemannian manifold. Given  $f\in \text{Diff}^{\,1}(X)$ and a hyperbolic saddle periodic point $p$ by $f$, then either $H(p,f)=\{p\}$ or the set $\mathfrak I \cap H(p,f)$ is a Baire residual subset of $H(p,f)$.
\end{maincorollary}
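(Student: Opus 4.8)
The plan is to run the mechanism behind Corollary~\ref{main-c} directly on the compact invariant set $K \defeq H(p,f)$: exhibit two distinct $f$-invariant Borel probability measures supported on $K$ whose ergodic basins are dense in $K$, and then invoke Corollary~\ref{main-b} (equivalently, Theorem~\ref{thm:main}) for the continuous map $g \defeq f|_K$. If $H(p,f)=\{p\}$ there is nothing to prove, so assume $H(p,f)\neq\{p\}$. Then, as is standard for a non-trivial homoclinic class, $\mathcal{O}(p)$ has a transverse homoclinic point off its orbit; by the Birkhoff--Smale theorem (which holds in class $C^1$) the periodic points homoclinically related to $p$ are dense in $K$, and in particular there is a periodic point $q$ with $\mathcal{O}(q)\neq\mathcal{O}(p)$. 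Being homoclinically related to $p$, this $q$ satisfies $H(q,f)=H(p,f)=K$.

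Let $\mu_p$ and $\mu_q$ be the uniform probabilities supported on $\mathcal{O}(p)$ and $\mathcal{O}(q)$; these are two distinct $g$-invariant Borel probability measures on $K$. I would then check that each of their ergodic basins of attraction (for $g$) is dense in $K$. For $\mu_p$: any $x\in W^s(\mathcal{O}(p))$ has $\dist(f^n(x),\mathcal{O}(p))\to 0$, whence the orbit of $x$ eventually shadows $\mathcal{O}(p)$, cycling through its branches with equal frequency, so that $\frac1n\sum_{j=0}^{n-1}\delta_{f^j(x)}\to\mu_p$ in the weak$^*$ topology; thus the ergodic basin of $\mu_p$ contains $W^s(\mathcal{O}(p))\cap K$, which in turn contains $W^s(\mathcal{O}(p))\pitchfork W^u(\mathcal{O}(p))$ and is therefore dense in $K$ by the very definition of $H(p,f)$. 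The identical reasoning applied to $q$, together with $H(q,f)=K$, shows that the ergodic basin of $\mu_q$ contains $W^s(\mathcal{O}(q))\pitchfork W^u(\mathcal{O}(q))$ and is dense in $K$ as well. With both measures in hand, Corollary~\ref{main-b} applied to the compact metric space $K$ and the continuous map $g$ yields that the irregular set of $g$ is Baire residual in $K$; alternatively one applies Theorem~\ref{thm:main} with $\mathcal{A},\mathcal{B}$ the two dense sets just produced and $\varphi$ any continuous function satisfying $\int\varphi\,d\mu_p\neq\int\varphi\,d\mu_q$, since then $L_\varphi$ is constant on each of $\mathcal{A}$ and $\mathcal{B}$ with distinct values.

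It remains to transfer the conclusion from $g$ to $f$: since $K$ is closed in $X$, the canonical inclusion $\mathcal{P}(K)\hookrightarrow\mathcal{P}(X)$ is a topological embedding for the weak$^*$ topologies, so for $x\in K$ the empirical measures $\frac1n\sum_{j=0}^{n-1}\delta_{f^j(x)}$ converge in $\mathcal{P}(K)$ if and only if they converge in $\mathcal{P}(X)$. Hence the irregular set of $g$ coincides with $\mathfrak I\cap H(p,f)$, which is therefore Baire residual in the compact (thus Baire) space $H(p,f)$, as claimed.

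I expect the only genuinely non-formal point to be the first step of the second paragraph: producing, inside the non-trivial class $K$, a periodic orbit distinct from $\mathcal{O}(p)$ (equivalently, a second invariant measure on $K$ with dense ergodic basin). This is precisely where the hypothesis $H(p,f)\neq\{p\}$ is used, through the horseshoe attached to a transverse homoclinic point. Compared with Corollary~\ref{main-c}, where two periodic points with dense stable manifolds are assumed outright, here one must manufacture the second periodic orbit and verify that its stable manifold meets $K$ in a dense subset; both are classical features of non-trivial homoclinic classes, and everything else is bookkeeping together with the already-established Corollary~\ref{main-b}.
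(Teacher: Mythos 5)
Your proposal is correct and follows essentially the same route as the paper: both produce a second hyperbolic periodic orbit $\mathcal O(q)\neq\mathcal O(p)$ homoclinically related to $p$ via Birkhoff--Smale, observe that the stable manifolds of $\mathcal O(p)$ and $\mathcal O(q)$ meet $H(p,f)$ in dense subsets (the paper via the $\lambda$-lemma, you via the equality $H(q,f)=H(p,f)$, which rests on the same lemma), and then apply Theorem~\ref{thm:main}/Corollary~\ref{main-b} on the compact invariant set $H(p,f)$ with an observable separating the two periodic averages. The only difference is presentational (you route through the periodic measures and their ergodic basins and spell out the restriction to $f|_{H(p,f)}$, which the paper leaves implicit).
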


\smallskip

The previous result complements \cite[Proposition 9.1]{ABC2011}, which asserts that the set of points with historic behavior is Baire generic in the closure $\overline{W^s(\mathcal O(p))}$ of the basin of attraction
of any non-trivial homoclinic class $H(p)$. This proposition does not impart the same information of the previous corollary since it is not directly applicable to the homoclinic class itself and $H(p)$ is a meager subset of $\overline{W^s(\mathcal O(p))}$ even in the hyperbolic context. However, the proof of \cite[Proposition 9.1]{ABC2011} may be adjusted to convey the statement of Corollary~\ref{main-homoclinic}.

\smallskip

At this moment, it is natural to ask whether Theorem~\ref{thm:main} and its corollaries can be adapted to deal with the broader context of non-additive sequences, which are relevant to the computation of several dynamical quantities, such as entropy and Lyapunov exponents. The case of almost additive or asymptotically additive sequences of continuous functions carries no extra difficulties (cf. Remark~\ref{rmk51}); on the contrary, the case of continuous sub-additive sequences requires further explanation.

\smallskip

A sequence $\Phi=(\varphi_n)_{n\in \mathbb N}$ of continuous maps $\varphi_n: X \to \mathbb R$ is \emph{sub-additive} if
$$\varphi_{m+n} \leqslant \varphi_m \circ f^n+ \varphi_n \quad \quad \forall \,m,n \,\in\, \mathbb N.$$
Accordingly, the set of $\Phi$-irregular points is defined by
\begin{equation}\label{eq:IFP}
\mathfrak I_\Phi = \Big\{x \in X  \colon\, \Big(\frac1n\, \varphi_n(x) \Big)_{n \, \in \, \mathbb{N}} \quad \mbox{does not converge}\, \Big\}.
\end{equation}
Kingman's sub-additive ergodic theorem guarantees that $\mathfrak I_\Phi$ has zero measure with respect to every $f$-invariant Borel probability measure on $X$. In this context, the map $M_\Phi$ defined by
\begin{equation}\label{eq:function LP}
x \in X \quad \mapsto \quad M_\Phi(x) = \inf_{n\,\in\, \mathbb N} \,\frac1n \,\varphi_n (x)
\end{equation}
is a Lyapunov function associated to the discrete dynamical system $f$. More precisely, the inequality
$$\frac1{n+1} \,\varphi_{n+1}(x) \,\leqslant\, \frac1{n+1} \,\big[\,\varphi_n (f(x)) + \sup_{z\,\in\, X} \,\varphi_1(z)\, \big]$$
ensures that $M_\Phi(x) \leqslant M_\Phi(f(x))$ at every $x \in X$.

\smallskip

Although the function $M_\Phi$ is measurable, because it is the infimum of a sequence of continuous functions, in general one can not rely on higher regularity. This is an evidence that the previous concept of Lyapunov function scarcely describes the irregular set $\mathfrak I_\Phi$, though it suggests the following version of Theorem~\ref{thm:main} for sub-additive sequences.

\begin{maintheorem}\label{thm:mainB}
Let $(X,d)$ be a compact metric space, $f: X \to X$ be a continuous map and $\Phi=(\varphi_n)_{n\in \mathbb N}$ be a sub-additive sequence of continuous maps on $X$. Assume that
%$M_\Phi$ is a first integral and that
there exist two dense subsets $\mathcal{A}$ and $\mathcal{B}$ of $X$ such that the restrictions of $M_\Phi$ to $\mathcal{A}$ and to $\mathcal{B}$ are constant, though the value at $\mathcal{A}$ is different from the one at $\mathcal{B}$. Then $\mathfrak I_\Phi$ is Baire residual in $X$.
\end{maintheorem}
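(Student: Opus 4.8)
The plan is to derive Theorem~\ref{thm:mainB} from the same abstract principle, proved in Section~\ref{se:teo-p}, that underlies Theorem~\ref{thm:main}, now applied to the sequence of continuous functions $g_n \defeq \tfrac1n\,\varphi_n \colon X \to \R$, whose set of points of divergence is, by definition, exactly $\mathfrak I_\Phi$. As in the additive case, I would first reduce the statement to the production of a single dense $G_\delta$ subset of $X$ contained in $\mathfrak I_\Phi$: since $(X,d)$ is compact, and hence a Baire space, the intersection of two dense $G_\delta$ sets is again dense $G_\delta$, so it is enough to exhibit one dense $G_\delta$ set on which $\liminf_n g_n$ lies below some constant and another on which $\limsup_n g_n$ lies above a strictly larger constant; on their intersection the sequence $(g_n)$ cannot converge. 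Assume, without loss of generality, that the constant value $\alpha$ of $M_\Phi$ on $\mathcal A$ is strictly smaller than its constant value $\beta$ on $\mathcal B$.

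The ``upper'' half is the easy one. Since $M_\Phi$ is the pointwise infimum of the $g_n$, one has $M_\Phi \le \liminf_n g_n \le \limsup_n g_n$ everywhere on $X$; hence on $\mathcal B$ the inequality $\limsup_n g_n \ge \beta$ holds, so $\mathcal B$ is contained in
\[
\mathcal G^{+}\ \defeq\ \Big\{x\in X \colon \limsup_{n\to+\infty} g_n(x)\ \ge\ \beta\Big\}\ =\ \bigcap_{k\in\N}\ \bigcap_{N\in\N}\ \bigcup_{n\ge N}\ \Big\{x\in X \colon g_n(x)>\beta-\tfrac1k\Big\}.
\]
As every $g_n$ is continuous, this is a countable intersection of open sets, so $\mathcal G^{+}$ is a $G_\delta$ set containing the dense set $\mathcal B$, hence residual.

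The ``lower'' half, the residuality of $\mathcal G^{-}\defeq\{x\in X\colon \liminf_{n\to+\infty}g_n(x)\le\alpha\}$, is where the sub-additive setting genuinely departs from the additive one, and I expect it to be the main obstacle. Rewriting $\mathcal G^{-}$ as $\bigcap_{k}\bigcap_{N}\bigcup_{n\ge N}\{x\colon g_n(x)<\alpha+\tfrac1k\}$ shows it is again a $G_\delta$ set, so only its density is at stake; but, contrary to the function $L_\varphi$ of Theorem~\ref{thm:main}, which \emph{is} the upper limit of the Birkhoff averages, here $M_\Phi$ lies only \emph{below} $\liminf_n g_n$, so the bare equality $M_\Phi\equiv\alpha$ on $\mathcal A$ does not automatically place $\mathcal A$ inside $\mathcal G^{-}$. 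The route I would take is to extract from $M_\Phi|_{\mathcal A}\equiv\alpha$ the local data it does provide and hand it to the general statement of Section~\ref{se:teo-p}: for each $a\in\mathcal A$ and each $\vep>0$ there is an index $n=n(a,\vep)$ with $g_n(a)<\alpha+\vep$, an inequality which by continuity of $g_n$ persists on a whole neighbourhood of $a$, so that ``values of some $g_n$ below $\alpha+\vep$'' occur on a dense open set; the general statement --- which concerns the accumulation points of sequences of continuous functions and in whose proof the sub-additive relation $\varphi_{m+n}\le\varphi_m\circ f^n+\varphi_n$ is used to propagate such local smallness --- is exactly what converts this into the density of $\mathcal G^{-}$. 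Granting both halves, $\mathcal G^{-}\cap\mathcal G^{+}$ is residual in $X$ and contained in $\mathfrak I_\Phi$, because on it $\liminf_n g_n\le\alpha<\beta\le\limsup_n g_n$; therefore $\mathfrak I_\Phi$ is residual. Finally, the almost-additive and asymptotically additive cases announced in Remark~\ref{rmk51} fit into the same scheme, since for such potentials $\tfrac1n\varphi_n$ remains uniformly close to a genuine Birkhoff average, to which Theorem~\ref{thm:main} applies directly.
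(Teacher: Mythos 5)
Your upper half is fine: since $M_\Phi\le\liminf_n g_n$, the dense set $\mathcal B$ (where $M_\Phi\equiv\beta$, the larger value, so that $g_n\ge\beta$ for \emph{every} $n$) sits inside the $G_\delta$ set $\mathcal G^{+}$, which is therefore residual. The genuine gap is exactly where you suspect it: the density of the open sets $O_{N,k}=\bigcup_{n\ge N}\{x\colon g_n(x)<\alpha+\tfrac1k\}$. The local information you extract from $M_\Phi|_{\mathcal A}\equiv\alpha$ --- for each $a\in\mathcal A$ some index $n(a,\vep)$ with $g_{n(a,\vep)}(a)<\alpha+\vep$ --- says nothing about how large $n(a,\vep)$ is; it could be uniformly bounded (even $\equiv 1$) for all $a$ in $\mathcal A\cap B$ for some ball $B$, in which case it produces no point of $O_{N,k}\cap B$ once $N$ exceeds that bound. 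The hand-off to the ``general statement'' of Section~\ref{se:teo-p} does not repair this: Theorem~\ref{teo-p} makes no use whatsoever of the sub-additivity relation (it concerns arbitrary sequences of continuous functions), and its hypothesis is that the \emph{upper limit} $U_\psi=\limsup_n\psi_n$ is constant on the two dense sets --- precisely the information you correctly observe is unavailable when only $M_\Phi=\inf_n g_n$ is prescribed. So, as written, the lower half is not proved.

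The missing argument is elementary but must use the density of \emph{both} sets at once, not only of $\mathcal A$. Suppose $O_{N,k}$ misses a ball $B$, i.e. $g_n\ge\alpha+\tfrac1k$ on $B$ for every $n\ge N$. For $a\in\mathcal A\cap B$ the infimum $\alpha=M_\Phi(a)$ is then attained among the first $N$ terms, so the continuous function $h=\min_{1\le n\le N}g_n$ equals $\alpha$ on the dense subset $\mathcal A\cap B$ of $B$, hence $h\equiv\alpha$ on $B$; but then $M_\Phi(b)\le h(b)=\alpha<\beta$ for any $b\in\mathcal B\cap B$, a contradiction. With this, each $O_{N,k}$ is dense, $\mathcal G^{-}$ is residual, and your scheme closes. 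An estimate of the same kind (on the paper's sets $\Lambda_N$ one has the sandwich $h-\vep\le M_\Phi\le h$ with $h=\min_{1\le n\le N}g_n$, replacing the bound $|U_\psi-\psi_N|\le\vep$ used in Lemma~\ref{O-le}) is what makes the paper's route work: there the convergence set is written as the countable union of the closed sets $\Lambda_N$ and each is shown to have empty interior. Your two-$G_\delta$ decomposition is a legitimate alternative, but only once the density of the $O_{N,k}$ is actually supplied.
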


\smallskip

In the aftermath of the previous result, we secure a more precise description of the Lyapunov irregular points, that is, points that are non-typical for Oseledets' theorem. Given a compact metric space $(X,d)$, a continuous $f : X \to X$ and a continuous \emph{linear cocycle} $A: X \to GL(k, \mathbb R)$, $k \geqslant 2$, we assign to the cocycle the skew-product given by
$$\begin{array}{rccc}
F_A \colon & X \times \mathbb R^k & \to & M \times \mathbb R^k \\
	& (x,v) & \mapsto & (\, f(x), \, A(x)v \,).
\end{array}
$$
Set $A^n(x):= A(f^{(n-1)}(x)) \dots A(f(x))\, A(x)$ for each $x\in X$ and $n \in \mathbb{N}$. By Oseledets' theorem the \emph{largest Lyapunov exponent} (respectively the smallest Lyapunov exponent) associated to an $f$-invariant ergodic Borel probability measure $\mu$ is given by
$$\lambda^+(A,\mu)= \inf_{n \,\in \,\mathbb{N}}\, \frac1n \,\log \|A^n(x)\|	\quad \quad \text{ for } \mu \text{-a.e. $x$}$$
(respectively $\lambda^-(A,\mu)= \sup_{n \,\in \,\mathbb{N}} \,\frac1n\, \log \|A^n(x)^{-1}\|^{-1} \,\,\text{for } \mu \text{-a.e. $x$}$). Using Theorem~\ref{thm:mainB}, whenever the dynamics $f$ is minimal, we get additional information on the set of points whose Lyapunov exponent fails to be well defined.

\begin{maincorollary}\label{main-d}
Let $(X,d)$ be a compact metric space and $f: X \to X$ be a continuous minimal map. Given $k \geqslant 2$ and a continuous linear cocycle $A \in C^0(X,GL(k,\mathbb R))$, either
$$%\sup_{x\in X} \lim_{n\to\infty} \frac1n \log \|A^n(x)^{-1}\|^{-1} = \inf_{x\in X} \lim_{n\to\infty} \frac1n \log \|A^n(x)\|
\inf_{x\,\in\, X} \,\lim_{n\,\to\,+\infty}\, \frac1n \,\log \|A^n(x)\| = \sup_{x\,\in\, X}\, \lim_{n\,\to\,+\infty}\, \frac1n\, \log \|A^n(x)\|$$
and the previous value is the unique possible largest Lyapunov exponent associated to any $f$-invariant probability measure,
or there exists a Baire residual subset $\mathcal R \subset X$ such that
$$\forall \, x \in \mathcal R \quad \quad  \liminf_{n\,\to\,+\infty}\, \frac1n\, \log \|A^n(x)\| \,<\,\limsup_{n\,\to\,+\infty} \,\frac1n\, \log \|A^n(x)\|.$$
\end{maincorollary}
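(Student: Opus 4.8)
The plan is to reduce the statement to Theorem~\ref{thm:mainB} applied to a suitable sub-additive sequence. Set $\varphi_n(x) \defeq \log \|A^n(x)\|$. The sub-multiplicativity of the operator norm, $\|A^{m+n}(x)\| = \|A^m(f^n(x))\, A^n(x)\| \leqslant \|A^m(f^n(x))\|\, \|A^n(x)\|$, shows at once that $\Phi = (\varphi_n)_{n\in\mathbb N}$ is a sub-additive sequence of continuous functions, so the associated Lyapunov function is $M_\Phi(x) = \inf_{n\in\mathbb N} \frac1n \log\|A^n(x)\|$. Now I would invoke minimality of $f$: because $M_\Phi$ is an infimum of continuous functions, it is upper semicontinuous, and the inequality $M_\Phi(x) \leqslant M_\Phi(f(x))$ established in the excerpt means $M_\Phi$ is a sub-invariant function. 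For an upper semicontinuous sub-invariant function under a minimal map, $M_\Phi$ attains its maximum on a closed invariant set, which must be all of $X$, forcing $M_\Phi$ to be constant; call that constant value $c_{\min}$. (Concretely: if $x_0$ realises $\max M_\Phi$, then $M_\Phi(f^k(x_0)) = \max M_\Phi$ for all $k$, so $M_\Phi \equiv \max M_\Phi$ on $\overline{\{f^k(x_0)\}} = X$ by upper semicontinuity.)

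Having shown $M_\Phi$ is constant, Theorem~\ref{thm:mainB} cannot be applied to $\Phi$ directly, so I would instead work with the time-reversed cocycle. Consider $\psi_n(x) \defeq \log \|A^n(x)^{-1}\|$, which by the same sub-multiplicativity argument (now for $A^{m+n}(x)^{-1} = A^n(x)^{-1} A^m(f^n(x))^{-1}$, reading the product in the opposite order) is again sub-additive with Lyapunov function $M_\Psi(x) = \inf_n \frac1n \log\|A^n(x)^{-1}\|$, and the same minimality argument makes $M_\Psi$ constant as well; denote this value $-c_{\max}$, so that $c_{\max} = -M_\Psi$. The point is that for a fixed $x$, the limit $\lim_n \frac1n \log\|A^n(x)\|$ \emph{exists} and equals some value in $[c_{\min}, c_{\max}]$ precisely when the Birkhoff-type sequence converges, and one checks that $\limsup_n \frac1n\log\|A^n(x)\| \leqslant c_{\max}$ and $\liminf_n \frac1n\log\|A^n(x)\| \geqslant c_{\min}$ fail to pin down a unique value exactly when $c_{\min} < c_{\max}$. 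The dichotomy is thus: if $c_{\min} = c_{\max}$, then for every $x$ the sequence $\frac1n\log\|A^n(x)\|$ is squeezed to a single value, which by sub-additivity and Fekete's lemma applied along invariant measures equals the common top Lyapunov exponent — this is the first alternative. If $c_{\min} < c_{\max}$, then $M_\Phi$ and $M_\Psi$ together witness two dense (indeed, all of $X$) level sets at incompatible heights; more carefully, one produces two dense sets on which the relevant $\liminf$ takes the two distinct values $c_{\min}$ and $c_{\max}$, and feeds this into Theorem~\ref{thm:mainB}, or directly into the general statement of Section~\ref{se:teo-p}, to obtain a Baire residual set $\mathcal R$ on which $\liminf_n \frac1n\log\|A^n(x)\| < \limsup_n \frac1n\log\|A^n(x)\|$.

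The step I expect to be the main obstacle is the last one: producing, in the case $c_{\min} < c_{\max}$, two \emph{dense} subsets of $X$ on which $\liminf_n \frac1n\log\|A^n(x)\|$ takes the respective values $c_{\min}$ and $c_{\max}$ — since $M_\Phi$ being constant only controls the infimum of $\frac1n\log\|A^n(x)\|$ and not its $\liminf$. The resolution is that $c_{\min}$ is achieved as a limit: by Fekete applied to the sub-additive real sequence $a_n(x) = \varphi_n(x)$ one gets $\lim_n \frac1n \varphi_n(x)$ need not exist, but $\inf_n \frac1n \varphi_n(x)$ is approached along a subsequence, and then minimality lets one translate this behavior to a dense orbit; symmetrically for $c_{\max}$ via $\Psi$. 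Alternatively, one applies Theorem~\ref{thm:mainB} to $\Phi$ after observing that the hypothesis there is really about $M_\Phi$ distinguishing two dense sets, which here is supplied not by $M_\Phi$ itself but by combining the two constants $c_{\min}$ and $c_{\max}$ attached to $\Phi$ and $\Psi$ — this bookkeeping between the cocycle and its inverse, and the verification that non-coincidence of the two constants genuinely forces divergence of $\frac1n\log\|A^n(x)\|$ on a residual set rather than merely on a nonempty one, is the delicate part of the argument.
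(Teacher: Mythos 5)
Your reduction to the sub\mbox{-}additive sequence $\varphi_n=\log\|A^n(\cdot)\|$ is the right starting point, and your observation that $M_\Phi=\inf_n\frac1n\varphi_n$ is upper semicontinuous, sub\mbox{-}invariant, and hence \emph{constant} under minimality is correct. But that observation should have told you that the infimum is the wrong function to track, not that you must pass to the inverse cocycle. The function the argument of Section~\ref{se:teo-p} actually needs is $U(x)=\limsup_n\frac1n\log\|A^n(x)\|$ (this is the $U_\psi$ of Theorem~\ref{teo-p}, and it is what the proof of Theorem~\ref{thm:mainB} really uses). The paper's route is: from the two\mbox{-}sided bound
$$\min_{z\in X}\|A(z)^{-1}\|^{-1}\,\|A^{n-1}(f(x))\|\;\leqslant\;\|A^{n}(x)\|\;\leqslant\;\max_{z\in X}\|A(z)\|\,\|A^{n-1}(f(x))\|$$
one sees that $\frac1n\log\|A^n(x)\|$ and $\frac1n\log\|A^{n-1}(f(x))\|$ differ by $O(1/n)$, so $U$ (and the corresponding $\liminf$) is a genuine first integral, hence constant on every orbit. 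If the first alternative of the corollary fails, pick $x_1,x_2$ at which the limits exist and differ; by minimality their orbits are dense, $U$ takes two distinct constant values on them, and Theorem~\ref{teo-p} applied to $\psi_n=\frac1n\log\|A^n(\cdot)\|$ gives the residual set $\mathcal R$. No second cocycle is needed.

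The detour through $\psi_n=\log\|A^n(\cdot)^{-1}\|$ contains a genuine error: $\|A^n(x)^{-1}\|^{-1}$ is the \emph{smallest} singular value of $A^n(x)$, so $\frac1n\log\|A^n(x)^{-1}\|^{-1}\leqslant\frac1n\log\|A^n(x)\|$ for every $n$, and your $c_{\max}=\sup_n\frac1n\log\|A^n(x)^{-1}\|^{-1}$ is a supremum of \emph{lower} bounds. The claimed squeeze $\limsup_n\frac1n\log\|A^n(x)\|\leqslant c_{\max}$ is false (for the constant cocycle $A=\mathrm{diag}(2,1/2)$ one gets $c_{\min}=\log 2$ and $c_{\max}=-\log 2$), so the dichotomy $c_{\min}=c_{\max}$ versus $c_{\min}<c_{\max}$ does not correspond to the dichotomy in the statement; $-M_\Psi$ is attached to the smallest Lyapunov exponent, which is the subject of the ``dual statement'' mentioned after the corollary, not of this one. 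Finally, the step you yourself flag as delicate --- producing two dense sets on which the relevant limit superior takes distinct constant values --- is exactly the content of the proof and is never carried out; it follows immediately once one knows $U$ is a first integral, which is the one computation your proposal omits.
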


\smallskip

A dual statement, concerning smallest Lyapunov exponents, holds similarly. We note that Corollary~\ref{main-d} improves \cite[Theorem~4]{Fu97}, where the author considers $GL(2,\mathbb R)$-valued cocycles over minimal and uniquely ergodic maps, and whose argument is exclusive to $2$-dimensional linear cocycles. We also observe that, in a complementary direction, the non-existence of Lyapunov exponents for H\"older continuous matrix cocycles over maps satisfying exponential specification has been considered in \cite{Tian-c}. As far as we know, no other references address this problem of identification of topologically big sets of irregular points for sub-additive sequences of potentials.

\smallskip

As an illustration of the scope of Theorem~\ref{thm:mainB}'s applications, let us consider a continuous map $f: X\to X$ on a compact metric space, a sub-additive sequence of continuous functions $\Phi=(\varphi_n)_{n \, \in \, \mathbb{N}} \in C(X)^\N$ and a probability measure $\mu$ which is \emph{weak Gibbs} with respect to $\Phi$. For such a measure $\mu$ we may find a sequence of positive constants $(K_n)_{n \, \in \,\mathbb{N}}$ satisfying $\lim_{n\,\to\,+\infty} \,\frac{1}{n} \,\log K_n = 0$ and
\begin{equation}\label{eq:weak-Gibbs}
K_{n}^{-1}(x) \,\leqslant\, \frac{\mu(B_n(x,\varepsilon))}{e^{-n \,P(\Phi)\,+\,\varphi_n(x)}}\, \leqslant \, K_n(x) \quad \quad \forall \,x \in X \quad \forall \, n \, \in \,\mathbb{N}
\end{equation}
where $B_n(x,\varepsilon) := \big\{y \in X: \,d_n(x,y)\,<\,\varepsilon\big\}$ stands for the dynamical ball centered at $x$ of radius $\varepsilon$ and length $n$. These measures appear naturally in the context of equilibrium states for matrix cocycles $A: \,\{1, \dots, d\}^{\mathbb N} \to GL(d,\mathbb C)$ over the shift map $\sigma$ (cf. \cite{FK11}), with the weak-Gibbs condition then rewritten as
\begin{equation}\label{eq:weak-Gibbs}
K^{-1} e^{-n \,P(q)} \|A^{(n)}(x)\|^q \,\leqslant\, \frac{\mu(B_n(x,\varepsilon))}{e^{-n \,P(\Phi)\,+\,\varphi_n(x)}} \,\leqslant\, K e^{-n \,P(q)} \|A^{(n)}(x)\|^q
\end{equation}
where $A^{(n)}(x)=A(\sigma^{n-1}(x))\dots A(\sigma(x)) A(x)$
%for every $n \in \mathbb{N}$
and $P(q)$ is the pressure function of the family $\Phi_q = (q\log \|A^n(\cdot)\|)_{n \,\in \,\mathbb{N}}$.

\smallskip

Now, when $\mu$ is $f$-invariant, the entropy $h_\mu(f)$ can be estimated using dynamical balls and the Brin-Katok formula \cite{BK1983} by
\begin{equation}\label{eq:BK}%\tag{Brin-Katok}
h_\mu(f) = \lim_{\varepsilon\,\to \,0^+}\, \limsup_{n\,\to\, +\infty} \,-\frac1n \,\log \,\mu(B_n(x,\varepsilon))	\quad \quad \text{at $\mu\,$ a.e. } x
\end{equation}
and, if $f$ is expansive, the previous $\limsup$ does not depend on the value of $\vep$ if this is kept small enough. Moreover, in general, one has
$$\limsup_{n\,\to\,+\infty} \,-\frac1n \,\log\, \mu(B_n(x,\varepsilon)) = \limsup_{n\,\to\, +\infty} \,\frac1n\,\varphi_n(x) \quad \quad \forall \, x \in X$$
and
$$\liminf_{n\,\to\, +\infty} \,-\frac1n \,\log \,\mu(B_n(x,\varepsilon)) = \liminf_{n\,\to\,+\infty} \,\frac1n \,\varphi_n(x)\quad \quad \forall \, x \in X.$$
Therefore, if $f$ is, for instance, the one-sided full shift on a finite alphabet (so the pre-orbit of every point is dense) and $\mu$ is an $f$-invariant weak Gibbs measure, then Theorem~\ref{thm:mainB} implies that either the convergence to the metric entropy holds at all points, or the set of points for which the Brin-Katok limit \eqref{eq:BK} does not exist is a Baire residual subset of $X$.

\section{Preliminary result}\label{se:teo-p}

Let $(X,d)$ be a compact metric space and consider a sequence $(\psi_n)_{n \, \in \, \mathbb{N}}$ of continuous maps $\psi_n: X \to \mathbb{R}$. Define the map
$$x \in X \quad \mapsto \quad U_\psi(x) = \limsup_{n \, \in \, \mathbb{N}}\, \psi_n(x)$$
and consider the set
$$C_\psi = \Big\{x \in X  \colon\, \Big(\psi_n(x)\Big)_{n \, \in \, \mathbb{N}} \quad \mbox{does not converge}\Big\}.$$

\smallskip

\begin{theorem}\label{teo-p}
Assume that there exist two dense subsets $\mathcal{A}$ and $\mathcal{B}$ of $X$ such that the restrictions of the map $U_\psi$ to $\mathcal{A}$ and to $\mathcal{B}$ are constant, though the value at $\mathcal{A}$ is different from the one at $\mathcal{B}$. Then $C_\psi$ is a Baire residual subset of $X$.
\end{theorem}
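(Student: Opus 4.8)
The plan is to exhibit $C_\psi$ as a countable intersection of open dense sets. Write $a$ for the constant value of $U_\psi$ on $\mathcal A$ and $b$ for its value on $\mathcal B$, and assume without loss of generality that $a < b$; fix a real number $c$ with $a < c < b$. The key observation is that if $x$ is such that $\psi_n(x) < c$ for infinitely many $n$ \emph{and} $\psi_n(x) > c$ for infinitely many $n$, then $(\psi_n(x))_n$ cannot converge, so $x \in C_\psi$. Thus it suffices to show that the set
$$
G_{-} = \bigcap_{k \in \mathbb N} \ \bigcup_{n \geq k} \ \{x \in X : \psi_n(x) < c\}
\qquad\text{and}\qquad
G_{+} = \bigcap_{k \in \mathbb N} \ \bigcup_{n \geq k} \ \{x \in X : \psi_n(x) > c\}
$$
are both Baire residual, for then $G_- \cap G_+ \subseteq C_\psi$ is residual as well.

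First I would check that each level set $\{\psi_n < c\}$ and $\{\psi_n > c\}$ is open, by continuity of $\psi_n$; hence each $\bigcup_{n \geq k}\{\psi_n < c\}$ and each $\bigcup_{n \geq k}\{\psi_n > c\}$ is open, and $G_\pm$ are $G_\delta$ sets. The substance is density of the open sets $O_k^- := \bigcup_{n \geq k}\{\psi_n < c\}$ (and symmetrically $O_k^+$). For density of $O_k^-$: take any nonempty open $V \subseteq X$; since $\mathcal A$ is dense, pick $x_0 \in \mathcal A \cap V$, so that $U_\psi(x_0) = a < c$, which means $\limsup_n \psi_n(x_0) < c$, hence $\psi_n(x_0) < c$ for all $n$ large enough — in particular for some $n \geq k$ — so $x_0 \in O_k^- \cap V$. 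This shows $O_k^-$ is dense. Symmetrically, using density of $\mathcal B$ and $U_\psi(x) = b > c$ on $\mathcal B$ (so $\psi_n(x) > c$ along a subsequence, in fact $\limsup \psi_n(x) = b > c$ forces $\psi_n(x) > c$ infinitely often), each $O_k^+$ is dense. By the Baire category theorem, since $X$ is a compact (hence complete) metric space, $G_- = \bigcap_k O_k^-$ and $G_+ = \bigcap_k O_k^+$ are residual, and so is $G_- \cap G_+ \subseteq C_\psi$. Therefore $C_\psi$ is Baire residual.

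The only point needing slight care — and the mildest obstacle — is the direction of the inequality extracted from a $\limsup$: from $U_\psi(x_0) = a < c$ one genuinely gets $\psi_n(x_0) < c$ \emph{eventually} (this is where the $\mathcal A$-side works cleanly and gives residuality of $G_-$), whereas from $U_\psi(x) = b > c$ one only gets $\psi_n(x) > c$ \emph{infinitely often}, which is exactly what is needed for $O_k^+$ to meet $V$. I would state these two elementary implications explicitly to avoid confusion. No compactness beyond completeness is used, and no structure on $f$ enters — the statement is purely about accumulation of a sequence of continuous functions, which is precisely why it can later be specialized with $\psi_n = \frac1n\sum_{j<n}\varphi\circ f^j$ (giving Theorem~\ref{thm:main}) or $\psi_n = \frac1n\varphi_n$ (giving Theorem~\ref{thm:mainB}).
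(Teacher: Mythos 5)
There is one genuine gap: your ``key observation'' is false as stated. Knowing that $\psi_n(x) < c$ for infinitely many $n$ and $\psi_n(x) > c$ for infinitely many $n$ does \emph{not} prevent $(\psi_n(x))_n$ from converging --- it may converge to $c$ itself (e.g.\ a sequence of the form $c + (-1)^n/n$ oscillates strictly around $c$ yet converges). Hence the inclusion $G_-\cap G_+\subseteq C_\psi$ does not hold with a single threshold. The repair is immediate and costs nothing: choose two constants $a<c_1<c_2<b$ and set $O_k^-=\bigcup_{n\geqslant k}\{\psi_n<c_1\}$ and $O_k^+=\bigcup_{n\geqslant k}\{\psi_n>c_2\}$. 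Any point $x$ in $\bigcap_k (O_k^-\cap O_k^+)$ satisfies $\liminf_n\psi_n(x)\leqslant c_1<c_2\leqslant\limsup_n\psi_n(x)$, so $x\in C_\psi$; and your two density arguments go through verbatim, since $\limsup_n\psi_n=a<c_1$ on $\mathcal A$ gives $\psi_n<c_1$ eventually, while $\limsup_n\psi_n=b>c_2$ on $\mathcal B$ gives $\psi_n>c_2$ infinitely often. Everything else in your write-up (openness of the level sets, Baire category in a complete metric space) is correct.

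With that fix, your route is genuinely different from the paper's. The paper argues on the complement: it fixes $\varepsilon<\frac16|a-b|$, shows that each closed set $\Lambda_N=\{x\colon |\psi_n(x)-\psi_m(x)|\leqslant\varepsilon\ \forall m,n\geqslant N\}$ has empty interior (via an auxiliary lemma controlling $U_\psi$ along convergent sequences inside $\Lambda_N$, using uniform continuity of $\psi_N$), and concludes that the set of convergence points, being contained in $\bigcup_N\Lambda_N$, is meager. Your argument instead exhibits a dense $G_\delta$ directly inside $C_\psi$ and dispenses with the lemma entirely; it is shorter and, as you note, uses only completeness of $X$, not compactness (the paper's lemma invokes compactness only to get uniform continuity, which is likewise avoidable). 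Both approaches specialize in the same way to Theorems~\ref{thm:main} and~\ref{thm:mainB}.
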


\smallskip

\begin{proof}
Suppose that the constant value of $U_\psi$ at the dense sets $\mathcal{A}$ and $\mathcal{B}$ are $\alpha$ and $\beta$, respectively, with $\alpha \neq \beta$. Fix $0 <\vep < \frac16 \,\big|\alpha - \beta\big|$. Since the map $\psi_n$ is continuous for every $n \in \mathbb{N}$, given a positive integer $N$ the set
\begin{equation}\label{eq:ON}
\Lambda_N = \Big\{x \in X \colon \,\big|\psi_n(x) - \psi_m(x)\big| \leqslant \vep \quad \quad \forall \,m,n \geqslant N \Big\}
\end{equation}
is closed in $X$.

\smallskip

\begin{proposition}\label{prop:O-meagre}
$\Lambda_N$ has empty interior for every $N \in \mathbb{N}$.
\end{proposition}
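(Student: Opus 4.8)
The plan is to argue by contradiction: suppose $\Lambda_N$ contains a nonempty open set $V\subset X$. The set $\Lambda_N$ records where the sequence $(\psi_n)$ has oscillated by at most $\vep$ from stage $N$ onwards, so for $x\in V$ we have $|\psi_n(x)-\psi_m(x)|\le\vep$ for all $m,n\ge N$. First I would exploit that $\mathcal A$ and $\mathcal B$ are dense to pick points $a\in\mathcal A\cap V$ and $b\in\mathcal B\cap V$. For such points, the $\limsup$ of $(\psi_n)$ equals $\alpha$ at $a$ and $\beta$ at $b$. Since on $V$ the tail of the sequence varies by at most $\vep$, one gets that every tail value $\psi_n(a)$ with $n\ge N$ lies within $\vep$ of $\limsup_n\psi_n(a)=\alpha$, hence within the interval $[\alpha-\vep,\alpha+\vep]$ (using that some subsequence converges to $\alpha$ and all tail terms are $\vep$-close to each other, so $\psi_n(a)\in[\alpha-2\vep,\alpha+\vep]$ say; the precise constant does not matter as long as it is a small multiple of $\vep$). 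Likewise $\psi_n(b)\in[\beta-2\vep,\beta+2\vep]$ for $n\ge N$.

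Next I would bring in continuity of a single $\psi_n$ to transfer information between $a$ and $b$. Fix one index $n\ge N$. The function $\psi_n$ is continuous, and $V$ is connected only if $X$ is — which we cannot assume — so instead of a connectedness argument I would use the two-point comparison directly: for the \emph{same} index $n\ge N$, the values $\psi_n(a)$ and $\psi_n(b)$ are both defined, with $\psi_n(a)$ near $\alpha$ and $\psi_n(b)$ near $\beta$. This already gives $|\psi_n(a)-\psi_n(b)|\ge|\alpha-\beta|-4\vep>\frac13|\alpha-\beta|>0$, which is not yet a contradiction by itself. The contradiction must come from combining $a$ and $b$ through a point where $U_\psi$ is \emph{not} controlled, or rather from the mismatch between the two constant values: I would instead choose $a\in\mathcal A\cap V$, and then observe that continuity of each $\psi_n$ fails to see the $\limsup$, so the right move is to select, for $a$, a subsequence $n_k\to\infty$ with $\psi_{n_k}(a)\to\alpha$, and for $b$ a subsequence $m_j\to\infty$ with $\psi_{m_j}(b)\to\beta$; by the $\Lambda_N$ condition at the point $a$ and the point $b$ separately, all tail terms cluster near $\alpha$ (resp.\ $\beta$), so in fact $\liminf$ and $\limsup$ at $a$ both equal $\alpha$ up to $\vep$, meaning $(\psi_n(a))_n$ is eventually trapped in a $2\vep$-interval around $\alpha$, and similarly around $\beta$ for $b$. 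Now pick any single $N'\ge N$; the open set $V$ contains a point of $\mathcal A$ and a point of $\mathcal B$ arbitrarily close together, and continuity of $\psi_{N'}$ at that near-coincidence forces $\psi_{N'}(a)$ and $\psi_{N'}(b)$ to be close — but they must lie near $\alpha$ and $\beta$ respectively, a contradiction once $6\vep<|\alpha-\beta|$.

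Let me state the contradiction cleanly, which is the step I would write carefully. Fix any point $x_0\in V$ and any index $n\ge N$. By density choose $a\in\mathcal A\cap V$ and $b\in\mathcal B\cap V$ with $d(a,x_0)$ and $d(b,x_0)$ so small that $|\psi_n(a)-\psi_n(x_0)|<\vep$ and $|\psi_n(b)-\psi_n(x_0)|<\vep$; this uses continuity of the single map $\psi_n$ at $x_0$ and is legitimate because $V$ is open. Then $|\psi_n(a)-\psi_n(b)|<2\vep$. On the other hand, from the $\Lambda_N$ membership of $a$ together with $\limsup_k\psi_k(a)=\alpha$, every $\psi_n(a)$ with $n\ge N$ satisfies $|\psi_n(a)-\alpha|\le 2\vep$ (choose $m\ge N$ with $\psi_m(a)$ within $\vep$ of $\alpha$, then $|\psi_n(a)-\alpha|\le|\psi_n(a)-\psi_m(a)|+|\psi_m(a)-\alpha|\le 2\vep$), and similarly $|\psi_n(b)-\beta|\le 2\vep$. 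Combining, $|\alpha-\beta|\le|\alpha-\psi_n(a)|+|\psi_n(a)-\psi_n(b)|+|\psi_n(b)-\beta|\le 2\vep+2\vep+2\vep=6\vep<|\alpha-\beta|$, which is absurd. Hence no such $V$ exists and $\Lambda_N$ has empty interior.

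The main obstacle, and the thing to be careful about, is the choice of $\vep$ and the bookkeeping of the constants: one must fix $\vep<\frac16|\alpha-\beta|$ at the outset (as the paper does) so that the chain of triangle inequalities closes, and one must be careful that the $2\vep$ bounds at $a$ and $b$ use only the $\Lambda_N$ hypothesis plus the $\limsup$ value, not convergence of the full sequence. No connectedness of $X$ is needed, since the whole argument takes place inside the single open set $V$ and uses continuity of one $\psi_n$ at a time; this is what makes the proof work for an arbitrary compact metric space.
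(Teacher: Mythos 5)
Your argument is correct and is essentially the paper's own proof: both rest on the observation that membership in $\Lambda_N$ forces every tail term $\psi_n(x)$, $n\geqslant N$, to lie within $O(\vep)$ of $U_\psi(x)$, and then use continuity of a single fixed $\psi_n$ to compare a point of $\mathcal{A}$ with a point of $\mathcal{B}$ inside the putative open subset, arriving at $|\alpha-\beta|\leqslant 6\vep$. The paper merely packages this as a lemma about sequences in $\Lambda_N$ converging to an interior point, whereas you do the two-point comparison directly; the estimates are the same.
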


\begin{proof}
Assume that there exists $N \in \mathbb{N}$ such that the interior of $\Lambda_N$, we denote by $\mathrm{int}(\Lambda_N)$, is non-empty, and take $\lambda \in \mathrm{int}(\Lambda_N)$. As $\mathcal{A}$ and $\mathcal{B}$ are dense in $X$, there exist  sequences $(p_n)_{n \, \in \, \mathbb{N}} \in \mathcal{A}$ and $(q_n)_{n \, \in \, \mathbb{N}} \in \mathcal{B}$ such that
\begin{equation}\label{eq:O-conv}
\forall\, n \in \mathbb{N} \quad p_n, \,q_n \,\in \,\mathrm{int}(\Lambda_N)  \quad \quad \text{and} \quad \quad \lim_{n\,\to\,+ \infty} \,p_n = \lambda = \lim_{n\,\to\,+ \infty} \,q_n.
\end{equation}

\smallskip

\begin{lemma}\label{O-le}
For every $N \in \mathbb{N}$, if a sequence $(x_k)_{k\,\in \,\mathbb{N}}$ of elements of $\Lambda_N$ converges, then
$$\bigg|\limsup_{k\,\to\,+ \infty}\, U_\psi(x_k) - U_\psi\Big(\lim_{k\,\to\,+ \infty} x_k\Big)\bigg| \leqslant 3\,\vep.$$
\end{lemma}

\smallskip

\begin{proof}[Proof of Lemma~\ref{O-le}]

Given $N \in \mathbb{N}$, take a convergent sequence $(x_k)_{k\,\in\, \mathbb{N}}$ contained in $\Lambda_N$ and
consider $\ell = \lim_{k\,\to\,\infty} \,x_k$, which is in $\Lambda_N$. By the definition of $\Lambda_N$, one has
\begin{eqnarray*}
\bigg|\psi_n(x_k) - \psi_m(x_k)\bigg| &\leqslant& \vep \quad \quad \forall\, m,n \geqslant N \quad \forall\, k \in \mathbb{N} \\
\bigskip
\bigg|\psi_n(\ell) - \psi_m(\ell)\bigg| &\leqslant& \vep \quad \quad \forall\, m,n \geqslant N.\\
\end{eqnarray*}
Fixing $m=N$ and taking the limit as $n$ goes to $+\infty$ in the first inequality along subsequences that attain the $\limsup$, we conclude that
$$\forall \, k \in \mathbb{N} \quad \bigg|U_\psi(x_k) - \psi_N(x_k))\bigg| \leqslant \vep
\quad \quad \text{and} \quad \quad 	\bigg|U_\psi(\ell) - \psi_N(\ell)\bigg| \leqslant \vep.$$
By the compactness of $(X,d)$ and the uniform continuity of $\psi_N$, we may choose $\delta_N > 0$ such that
$$d(z,w)<\delta_N \quad \Rightarrow \quad \Big|\psi_N(z) -\psi_N(w)\Big| < \vep.$$
Altogether, this proves that, for $k \in \mathbb{N}$ large enough so that $d(x_k, \ell) < \delta_N$, one has
$$\Big|U_\psi(x_k) - U_\psi(\ell)\Big| \leqslant 3\,\vep.$$
In particular,
$$\bigg|\limsup_{k\,\to\,+ \infty}\, U_\psi(x_k) - U_\psi(\ell)\bigg| \leqslant 3\,\vep$$
as claimed.
\end{proof}

\bigskip

Let us resume the proof of the Proposition~\ref{prop:O-meagre}. As $U_\psi(p_n)=\alpha$ and $U_\psi(q_n)=\beta$ for every $n \in \mathbb{N}$, the conditions \eqref{eq:O-conv} and Lemma~\ref{O-le} imply that
$$|\alpha - U_\psi(\lambda)| \leqslant 3\,\vep  \quad \text{ and } \quad |\beta - U_\psi(\lambda)| \leqslant 3\,\vep.$$
So $|\alpha - \beta| \leqslant 6\,\vep$, contradicting the choice of $\vep$. Thus, $\Lambda_N$ must have empty interior. This completes the proof of the proposition.
\end{proof}

Finally, observe that the set of points $x \in X$ whose sequence $\big(\psi_n(x)\big)_{n \, \in \, \mathbb{N}}$ converges is contained in the countable union $\bigcup_{N=1}^{+\infty} \,\Lambda_N$ of closed sets with empty interior. %Therefore, the proof of this theorem is a direct consequence of the following property.
This ends the proof of Theorem~\ref{teo-p}.
\end{proof}

\section{Proof of Theorem~\ref{thm:main}}\label{proof-main}

Let $(X,d)$ be a compact metric space and $f: X \to X$ be a continuous map and $\varphi: X \to \mathbb{R}$ be a continuous observable such that there exist two dense subsets $\mathcal{A}$ and $\mathcal{B}$ of $X$ such that the restrictions of the map $L_\varphi$ to $\mathcal{A}$ and to $\mathcal{B}$ are constant, equal to $\alpha$ and $\beta$, respectively, and $\alpha \neq \beta$. To prove Theorem~\ref{thm:main} we just run the argument used to show Theorem~\ref{teo-p} with the following adaptations:
\begin{enumerate}
\item The sequence $(\psi_n)_{n \, \in \, \mathbb{N}}$ is made of the Birkhoff averages of $f$ with respect to $\varphi$, that is, for every $x \in X$ and every $n \in \mathbb{N}$,
    $$\psi_n(x) = \frac1n\, \sum_{j=0}^{n-1} \,\varphi(f^j (x)).$$
%\smallskip
\item The map $U_\psi$ is precisely $L_\varphi$ (cf. definition \eqref{eq:function L}).
\medskip
\item For every $N \in \mathbb{N}$, the set $\Lambda_N$ is now defined by
$$\Lambda_N = \Big\{x \in X \colon \bigg| \frac1n \, \sum_{j=0}^{n-1} \,\varphi(f^j (x)) - \frac1m \,\sum_{j=0}^{m-1}\, \varphi(f^j (x))\bigg| \leqslant \vep \quad \quad \forall \,m,n \geqslant N \Big\}.$$
%\smallskip
\item The set $C_\psi$ becomes $\mathfrak I_\varphi$ (cf. definition \eqref{eq:IF}).
\end{enumerate}

\smallskip

\section{Proof of Corollary~\ref{main-b}}\label{proof2}

Let $(X,d)$ be a compact metric space, $f: X \to X$ be a continuous map and $\mu_1$ and $\mu_2$ two distinct $f$-invariant Borel probability measures whose ergodic basins $\mathbb{B}(\mu_1)$ and $\mathbb{B}(\mu_2)$ are dense in $X$. Choose $\varphi \in C^0(X,\mathbb R)$ such that $\int \varphi\, d\mu_1 \neq \int \varphi\, d\mu_2$, and consider $\mathcal{A} = \mathbb{B}(\mu_1)$ and $\mathcal{B} = \mathbb{B}(\mu_2).$
%These sets are $f$-invariant, so
In $\mathcal{A}$ and $\mathcal{B}$ the map $L_\varphi$ is constant, and equal to $\int \varphi\, d\mu_1$ and $\int \varphi\, d\mu_2$, respectively. Moreover, by assumption, these sets are dense in $X$. Thus, Theorem~\ref{thm:main} guarantees that $\mathfrak I_\varphi$ is Baire residual in $X$, and so $\mathfrak I$ is generic as well.

\begin{remark}
\emph{Another proof of this corollary could be obtained as follows. For each $f$-invariant Borel probability measure $\mu$ take $\mathcal E_N(\mu)$ as the set of points $x \in X$ for which there exists $n > N$ such that $\big| \frac1n \sum_{j=0}^{n-1} \varphi(f^j(x)) - \int \varphi\, d\mu\big| <\frac1N$. If there are $\mu_1 \neq \mu_2$ with dense basins, then the set $\mathcal E_N(\mu_1)\cap \mathcal E_N(\mu_2)$ is open and dense in $X$ for every $N \geqslant 1$. Moreover, the set of $\varphi$-irregular points contains the Baire residual subset $\bigcap_{N \, \geqslant \, 1}\, \mathcal E_N(\mu_1)\cap \mathcal E_N(\mu_2)$.}
\end{remark}

\section{Proof of Corollary~\ref{main-a}}\label{proof1}

Let $(X,d)$ be a compact metric space and $f: X \to X$ be a continuous minimal map. If $f$ is uniquely ergodic and $\mu$ denotes the unique $f$-invariant probability measure, then the sequences of Birkhoff averages of every continuous observable map $\varphi \in C^0(X,\mathbb R)$ are uniformly convergent in $X$ to the constant $\int_X \varphi \, d\mu$. Thus, in the weak$^*$ topology, one has
$$\lim_{n \,\to \,+\infty} \,\frac1n\,\sum_{j=0}^{n-1} \,\delta_{f^j(x)}  = \mu \quad \quad \forall \,x\in X.$$

\smallskip

Assume now that there exist two distinct $f$-invariant Borel ergodic probability measures $\mu_1$ and $\mu_2$. Choose $\varphi \in C^0(X,\mathbb R)$ such that $\int \varphi\, d\mu_1 \neq \int \varphi\, d\mu_2$ and take two points $p_1 \in \mathbb{B}(\mu_1)$ and $p_2 \in \mathbb{B}(\mu_2)$, where $\mathbb{B}(\mu_i)$ denotes the ergodic basin of attraction of $\mu_i$. These are dense subsets of $X$, since each contains a dense orbit, within which $L_\varphi$ is constant, equal to $\int \varphi\, d\mu_1$ in $\mathbb{B}(\mu_1)$ and to $\int \varphi\, d\mu_2$ in $\mathbb{B}(\mu_2)$. Therefore, we may apply Corollary~\ref{main-b}, concluding that the set $\mathfrak I$ is Baire residual in $X$, as claimed.

\smallskip

\begin{remark}\label{rmk51}
\emph{One may ask whether results analogous to Corollary~\ref{main-a} hold for non-additive sequences (e.g. almost additive, asymptotically additive or sub-additive). In view of the very recent work \cite{Noe}, the limits of almost additive or asymptotically additive sequence of continuous maps coincide with Birkhoff averages of a suitable continuous observable. In particular, the conclusion of Corollary~\ref{main-a} is valid for this more general class of limits and sequences. However, if the sequences are just sub-additive, then this is no longer true. Indeed, for any uniquely ergodic system there exists sub-additive sequences of continuous maps such that the set of non-typical points is Baire generic (cf. \cite{Fu97}).}
\end{remark}

\section{Proof of Corollary~\ref{main-c}}\label{proof3}

Let $(X,d)$ be a compact Riemannian manifold and $f: X \to X$ be a partially hyperbolic diffeomorphism. Suppose that $f$ has two distinct periodic points $p_1$ and $p_2$ whose stable manifolds, we denote by $W_f^s(p_1)$ and $W_f^s(p_2)$, respectively, are dense in $X$. We may assume that $p_1$ and $p_2$ are fixed by $f$, taking an appropriate power of $f$ otherwise. Choose $\varphi \in C^0(X,\mathbb R)$ such that $\varphi(p_1) \neq \varphi(p_2)$ and define $\mathcal{A} = W_f^s(p_1)$ and $\mathcal{B} = W_f^s(p_2).$ These are $f$-invariant, dense subsets of $X$, and the map $L_\varphi$ is constant in each of them, equal to $\varphi(p_1)$ and $\varphi(p_2)$, respectively. Indeed, as $p_1$ and $p_2$ are fixed points by $f$ and $\varphi$ is continuous, then one has $L_\varphi(x)=L_\varphi(p_1)=\varphi(p_1)$ for every $x \in W_f^s(p_1)$ (and, analogously, $L_\varphi(y)=L_\varphi(p_2)=\varphi(p_2)$ for every $y \in W_f^s(p_2)$) due to the following immediate chain of deductions:
\begin{eqnarray*}
x \in W_f^s(p_1) \quad &\Leftrightarrow& \quad \lim_{n\,\to\,+\infty}\, f^n(x)=p_1 \quad \Rightarrow \quad \lim_{n\,\to\,+\infty}\, \varphi(f^n(x))=\varphi(p_1) \\
&\Rightarrow& \quad \lim_{n \,\to \,+\infty} \,\frac1n \,\sum_{j=0}^{n-1}\, \varphi({f^j(x)})=\varphi(p_1).
\end{eqnarray*}
Consequently, Theorem~\ref{thm:main} ensures that $\mathfrak I$ is a Baire residual subset of $X$.

\section{Proof of Theorem~\ref{thm:mainB}}\label{proofB}

Let $(X,d)$ be a compact metric space and $f: X \to X$ be a continuous map and $\Phi:= (\varphi_n)_{n \,\in \, \mathbb{N}}$ be a sub-additive sequence of continuous functions.
% such that $L_\Phi$ is a first integral with respect to $f$.
Assume that $\mathcal{A}$ and $\mathcal{B}$ are dense subsets of $X$ such that the restrictions of $M_\Phi$ to $\mathcal{A}$ and to $\mathcal{B}$ are constant, equal to $\alpha$ and $\beta$, respectively, and $\alpha \neq \beta$. Again, to show Theorem~\ref{thm:mainB} we repeat the proof of Theorem~\ref{teo-p} after some adjustments:
\begin{enumerate}
\item The sequence $(\psi_n)_{n \, \in \, \mathbb{N}}$ is now $(\frac1n\,\varphi_n)_{n \,\in \, \mathbb{N}}$.
\smallskip
\item As the sequence $(\psi_n)_{n \, \in \, \mathbb{N}}$ is sub-additive, the map $U_\psi$ coincides with $M_\varphi$ (cf. definition \eqref{eq:function LP}) whenever the limit of the sequence exists.
\medskip
\item For every $N \in \mathbb{N}$, the set $\Lambda_N$ is given by
$$\Lambda_N = \Big\{x \in X \colon \bigg| \frac1n \, \varphi_n(x) - \frac1m \,\varphi_m(x)\bigg| \leqslant \vep \quad \quad \forall \,m,n \geqslant N \Big\}.$$
%\smallskip
\item The set $C_\psi$ is precisely what we denoted by $\mathfrak I_\Phi$ (cf. definition \eqref{eq:IFP}).
\end{enumerate}

\section{Proof of Corollary~\ref{main-homoclinic}}\label{proof-homoclinic}

Let $X$ be a compact Riemannian manifold and $f\in \text{Diff}^{\,1}(X)$. Assume that $H(p,f)\neq\{p\}$ is
a homoclinic class for $f$ associated to a hyperbolic saddle periodic point $p$. By Birkhoff's theorem, every transversal homoclinic point in $H(p,f)$ is accumulated by hyperbolic periodic orbits with the same index (that is, the dimension of the stable subbundle) as $p$. In particular, there exists a hyperbolic saddle $q \in H(p,f)$ which does not belong to the orbit $\mathcal O(p)$ of $p$ and is homoclinically related to $p$. This ensures that $W^s(\mathcal O(p)) \pitchfork W^u(\mathcal O(q)) \neq \emptyset$ and $W^s(\mathcal O(q)) \pitchfork W^u(\mathcal O(p)) \neq \emptyset$. Now, the $\lambda$-lemma guarantees that $W^s(\mathcal O(q))$ is dense in $H(p,f)$. This property together with Theorem~\ref{thm:main}
%, the previous construction
ensure that $\mathfrak{I}_\varphi \cap H(p,f)$ is a Baire residual subset of $H(p,f)$ for every continuous observable $\varphi$ whose averages along the orbits of the periodic points $p$ and $q$ differ. This proves the corollary.

\section{Proof of Corollary~\ref{main-d}}\label{proofd}

Let $(X,d)$ be a compact metric space, $f: X \to X$ be a continuous minimal map and $A \in C^0(X,GL(k, \mathbb R))$, for $k \geqslant 2$. Consider the sub-additive sequence $\Phi_A=(\varphi_n)_{n \,\in \,\mathbb N}$, where
$$\varphi_n(x)= \log \|A^n(x)\| \quad \quad \forall \, x \in X.$$
On the one hand, by the sub-additivity of the sequence $\Big(\log \|A^n(x)\|\Big)_{n \, \in \, \mathbb{N}}$ we conclude that
the map
$$M_{\Phi_A}(\cdot) = \inf_{n \, \in \, \mathbb{N}}\, \frac1n \,\log \|A^n(\cdot)\|$$
satisfies $M_{\Phi_A}(x) \leqslant M_{\Phi_A}(f(x))$ for every $x\in X$. On the other hand, for each $x\in X$, the reverse inequality $M_{\Phi_A}(x) \geqslant M_{\Phi_A}(f(x))$ follows from the estimate
$$\|A^{n}(x) \| = \|A^{n-1}(f(x)) A(x)\| \,\geqslant \, \min_{z\,\in\, X} \,\|A(z)^{-1}\|^{-1} \,\cdot\, \|A^{n-1}(f(x))\| \quad \quad \forall \, n \in \mathbb N.$$
Therefore, $M_{\Phi_A}$ is a first integral with respect to $f$. In particular, by the minimality of $f$, if there are $x_1 \neq x_2 \in X$ such that
$$\lim_{n\, \to\, +\infty} \,\frac1n \,\log \|A^n(x_1)\| \,<\, \lim_{n\, \to\, +\infty}\, \frac1n\, \log \|A^n(x_2)\|$$
then there exist dense sets $\mathcal A$ and $\mathcal B$ (namely, the orbits of $x_1$ and $x_2$ by $f$) satisfying the requirements of Theorem~\ref{thm:mainB}. Thus $\mathfrak I_\Phi$ is Baire residual in $X$.

\section{The case of flows}\label{se:flows}

The proofs of Theorems~\ref{thm:main} and ~\ref{thm:mainB} apply \emph{verbatim} to continuous $\mathbb R$-actions if one replaces Birkhoff averages by the suitable means obtained by integration along the orbits of the flow. Nevertheless, in some particular cases one can reduce the analysis of the continuous-time to the discrete-time setting. In order to illustrate such an application, given a continuous flow $(Y_t)_{t\,\in\, \mathbb R}$ on a compact metric space $X$ define the irregular set by
\begin{equation}
\mathfrak I = \Big\{x \in X  \colon \,\lim_{t \,\to \,+\infty} \,\frac1t \int_0^t \,\delta_{Y_s(x)} \, ds \quad \mbox{does not exist in the weak$^*$ topology}\, \Big\}.
\end{equation}
If there exists a pair of distinct Borel probability measures with dense ergodic basins, the situation can be reduced to the discrete-time setting in the sense that the following result is a consequence of Corollary~\ref{main-b}.

\begin{maincorollary}\label{main-f}
Let $(X,d)$ be a compact metric space and $(Y_t)_{t\,\in\, \mathbb R}$ be a continuous flow on $X$ preserving two distinct Borel probability measures with dense ergodic basins. Then $\mathfrak I$ is a Baire residual subset of $X$.
\end{maincorollary}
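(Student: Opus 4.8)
The plan is to reduce the continuous-time statement to the discrete-time Corollary~\ref{main-b} by passing to a time-$1$ map, and then to transfer the information about the irregular set of the flow. Concretely, I would set $f \defeq Y_1$, the time-one map of the flow, which is a continuous map of the compact metric space $X$. The two flow-invariant probability measures $\mu_1 \neq \mu_2$ are in particular $Y_1$-invariant. The first thing to check is that their ergodic basins for $f = Y_1$ are still dense. Since $\mu_i$ is $Y_t$-invariant for all $t$, one has the standard fact that the weak$^*$ limit $\frac1n\sum_{j=0}^{n-1}\delta_{f^j(x)} = \frac1n\sum_{j=0}^{n-1}\delta_{Y_j(x)}$ exists and equals $\mu_i$ for a point $x$ if and only if the continuous-time average $\frac1t\int_0^t \delta_{Y_s(x)}\,ds$ converges to $\mu_i$ as $t\to+\infty$; this is because for $t\in[n,n+1)$ the two averages differ by a term that is $O(1/n)$ in any metric inducing the weak$^*$ topology (using that the orbit segments $\{Y_s(x): s\in[j,j+1]\}$ are handled uniformly by continuity of the flow on the compact set $X\times[0,1]$). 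Hence the ergodic basin of $\mu_i$ for the flow coincides with the ergodic basin of $\mu_i$ for $f=Y_1$, and in particular the latter is dense in $X$ by hypothesis.

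Next I would apply Corollary~\ref{main-b} to the continuous map $f = Y_1$ and the two measures $\mu_1, \mu_2$ with dense ergodic basins, which yields that the discrete-time irregular set
$$\mathfrak I_{Y_1} = \Big\{x \in X \colon \lim_{n\to+\infty} \frac1n\sum_{j=0}^{n-1}\delta_{Y_1^j(x)} \ \text{does not exist in the weak$^*$ topology}\Big\}$$
is a Baire residual subset of $X$. Finally, I would observe the inclusion $\mathfrak I_{Y_1} \subseteq \mathfrak I$, where $\mathfrak I$ is the flow irregular set: if $x \notin \mathfrak I$, so that $\frac1t\int_0^t \delta_{Y_s(x)}\,ds$ converges as $t\to+\infty$, then restricting to integer times $t=n$ and using again the $O(1/n)$ comparison between $\frac1n\int_0^n \delta_{Y_s(x)}\,ds$ and $\frac1n\sum_{j=0}^{n-1}\delta_{Y_j(x)}$ shows that the discrete averages converge too, i.e. $x \notin \mathfrak I_{Y_1}$. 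Therefore $\mathfrak I \supseteq \mathfrak I_{Y_1}$ contains a Baire residual set and hence is itself Baire residual in $X$.

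The only genuinely non-formal point is the comparison between the integral average $\frac1t\int_0^t \delta_{Y_s(x)}\,ds$ and the Birkhoff sum $\frac1n\sum_{j=0}^{n-1}\delta_{Y_j(x)}$ uniformly in $x$; I expect this to be the main (mild) obstacle. It is handled by testing against an arbitrary $g \in C^0(X,\mathbb R)$: one writes $\frac1n\int_0^n g(Y_s(x))\,ds = \frac1n\sum_{j=0}^{n-1}\int_0^1 g(Y_{j+s}(x))\,ds$ and compares each block $\int_0^1 g(Y_{j+s}(x))\,ds$ with $g(Y_j(x))$; the difference is bounded by the modulus of continuity of $g$ along flow segments of length $\le 1$, which is controlled uniformly by the uniform continuity of the map $(x,s)\mapsto g(Y_s(x))$ on the compact set $X\times[0,1]$. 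Summing and dividing by $n$ gives a bound tending to $0$ uniformly in $x$, and passing from $t=n$ to general $t\in[n,n+1)$ costs only another $O(1/n)$ term since $g$ is bounded. Since the weak$^*$ topology on the (compact) space of probability measures on $X$ is metrizable via countably many such test functions, this establishes both the equality of ergodic basins and the inclusion $\mathfrak I_{Y_1}\subseteq\mathfrak I$, completing the argument.
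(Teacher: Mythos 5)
Your reduction to the time-one map breaks down at the comparison step, and the comparison you rely on is actually false. You claim that for $g\in C^0(X,\mathbb R)$ each block $\int_0^1 g(Y_{j+s}(x))\,ds$ is close to $g(Y_j(x))$ because $(x,s)\mapsto g(Y_s(x))$ is uniformly continuous on $X\times[0,1]$. Uniform continuity only controls $|g(Y_s(x))-g(Y_{s'}(x'))|$ when $(x,s)$ and $(x',s')$ are close; for a fixed $s\in[0,1]$ bounded away from $0$, the point $Y_{j+s}(x)$ need not be anywhere near $Y_j(x)$, so the average of $g$ over a flow segment of length $1$ is in general far from its value at the left endpoint. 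A concrete counterexample to your $O(1/n)$ bound: the rotation flow $Y_t(x)=x+t\ (\mathrm{mod}\ 1)$ on $S^1$ has $Y_1=\mathrm{id}$, so $\frac1n\sum_{j=0}^{n-1}\delta_{Y_j(x)}=\delta_x$ for every $n$, while $\frac1n\int_0^n\delta_{Y_s(x)}\,ds=\mathrm{Leb}$; these stay a fixed positive weak$^*$ distance apart. The same example shows that the flow ergodic basin need not be contained in the ergodic basin of $Y_1$ (every point lies in the flow basin of $\mathrm{Leb}$, no point lies in its $Y_1$-basin), so the density of the $Y_1$-basins --- the hypothesis you need in order to invoke Corollary~\ref{main-b} --- does not follow from the hypothesis on the flow. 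The inclusion $\mathfrak I_{Y_1}\subseteq\mathfrak I$ is unjustified for the same reason: flow-regularity of $x$ only yields convergence of $\frac1n\sum_{j}h(Y_j(x))$ for observables of the special form $h=\int_0^1 g\circ Y_s\,ds$, not for arbitrary $h\in C^0(X)$.

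The repair is to average the observable along the flow rather than sample it. With $f=Y_T$ and $\varphi_T(x)=\frac1T\int_0^T\varphi(Y_s(x))\,ds$ the Birkhoff sums telescope exactly, $\frac1n\sum_{j=0}^{n-1}\varphi_T(f^j(x))=\frac1{nT}\int_0^{nT}\varphi(Y_s(x))\,ds$, so $L_{\varphi_T}$ is constant, equal to $\int\varphi\,d\mu_i$, on the dense flow basins $\mathbb B(\mu_i)$, and Theorem~\ref{thm:main} applies; moreover non-convergence of these sums forces non-convergence of the flow averages, giving $\mathfrak I_{\varphi_T}(f)\subseteq\mathfrak I$. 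This is essentially what the paper does, except that it additionally invokes the Pugh--Shub theorem to choose $T$ for which $\mu_1$ and $\mu_2$ remain ergodic for the time-$T$ map, so that the hypotheses of Corollary~\ref{main-b}, stated in terms of the map's ergodic basins, are met literally.
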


\begin{proof}
Let $\mu_1$ and $\mu_2$ be two distinct $(Y_t)_{t\,\in\, \mathbb R}$-invariant Borel probability measures whose ergodic basins $\mathbb{B}(\mu_1)$ and $\mathbb{B}(\mu_2)$ are dense in $X$. Pick $\varphi \in C^0(X,\mathbb R)$ such that $\int \varphi\, d\mu_1 \neq \int \varphi\, d\mu_2$. As a consequence of \cite{PS}, for each $i=1,2$ there exists a Baire residual subset $\mathcal R_i \subset \mathbb{R}$ of times such that $\mu_i$ is ergodic for the time-$t$ map $Y_t$ associated to $t \in \mathcal R_i$. Now, fix an arbitrary $T \in \mathcal R_1\cap \mathcal R_2$, and consider the homeomorphism $f = Y_T$ and the potential $\varphi_T:= \frac1T \int_0^T \varphi(Y_s(x))\, ds$.

\smallskip

The sets $\mathcal{A} = \mathbb{B}(\mu_1)$ and $\mathcal{B} = \mathbb{B}(\mu_2)$ are $f$-invariant and, by assumption, dense in $X$. Besides, by the ergodicity of $\mu_1$ and $\mu_2$ with respect to $f$, the map $L_{\varphi_T}$ is constant in $\mathcal{A}$ and $\mathcal{B}$, and equal to $\int \varphi\, d\mu_1$ and $\int \varphi\, d\mu_2$, respectively. Thus, Corollary~\ref{main-b} implies that $\mathfrak I$ is Baire residual in $X$.
\end{proof}

Corollary~\ref{main-f} implies, along the same lines used in the proof of Corollary~\ref{main-homoclinic}, that every
non-trivial homoclinic class of a vector field has a Baire generic subset of points with historic behavior.
Since every singular-hyperbolic attractor in dimension three is a homoclinic class (cf. \cite{AP} for the definition and proofs),
we conclude the following:

\begin{maincorollary}\label{main-g}
Let $M$ be a three-dimensional compact Riemannian manifold, $(X_t)_t$ be a $C^1$-smooth flow and
$\Lambda$ be a singular-hyperbolic attractor. The set of points with historic behavior is Baire residual in $\Lambda$.
\end{maincorollary}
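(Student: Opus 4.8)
The plan is to reduce Corollary~\ref{main-g} to Corollary~\ref{main-f} exactly as Corollary~\ref{main-homoclinic} was reduced to Theorem~\ref{thm:main} (or rather, as the remark following Corollary~\ref{main-f} already sketches for homoclinic classes of vector fields). The key structural input, which I would quote, is that in dimension three every singular-hyperbolic attractor $\Lambda$ is a homoclinic class $H(\sigma)$ or $H(\mathcal O(p))$ --- more precisely, $\Lambda$ is the homoclinic class of a hyperbolic periodic orbit contained in it (cf.\ \cite{AP}). So it suffices to produce, on $\Lambda$, two distinct $(X_t)_t$-invariant Borel probability measures whose ergodic basins (relative to the flow) are dense in $\Lambda$.

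First I would exhibit the two periodic orbits. Since $\Lambda$ is a non-trivial homoclinic class, the Birkhoff--Smale argument used in the proof of Corollary~\ref{main-homoclinic} gives, inside $\Lambda$, two distinct hyperbolic periodic orbits $\mathcal O(p_1)$ and $\mathcal O(p_2)$ that are homoclinically related; in particular $W^s(\mathcal O(p_2)) \pitchfork W^u(\mathcal O(p_1)) \ne \emptyset$ and the $\lambda$-lemma forces $W^s(\mathcal O(p_1))\cap \Lambda$ and $W^s(\mathcal O(p_2))\cap \Lambda$ to be dense in $\Lambda$. Let $\mu_1$ and $\mu_2$ be the normalized Lebesgue (arc-length) measures supported on $\mathcal O(p_1)$ and $\mathcal O(p_2)$; these are distinct $(X_t)_t$-invariant ergodic measures. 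A point $x$ in the stable manifold of $\mathcal O(p_i)$ satisfies $\dist(X_t(x),\mathcal O(p_i))\to 0$ as $t\to+\infty$, and since the flow on the periodic orbit $\mathcal O(p_i)$ is conjugate to a rotation whose unique invariant measure is $\mu_i$, a standard shadowing/equidistribution argument gives $\frac1t\int_0^t \delta_{X_s(x)}\,ds \to \mu_i$ weak$^*$. Hence $W^s(\mathcal O(p_i))\cap\Lambda \subset \mathbb B(\mu_i)$, so the ergodic basins are dense in $\Lambda$.

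With this in hand, Corollary~\ref{main-f} applied to the restriction of the flow to the compact invariant set $\Lambda$ yields directly that the irregular set $\mathfrak I \cap \Lambda$ (equivalently, the set of points of $\Lambda$ with historic behavior) is Baire residual in $\Lambda$, which is the assertion.

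The main obstacle I anticipate is not in the category argument --- that is entirely outsourced to Corollary~\ref{main-f} --- but in cleanly invoking the structural theorem that a three-dimensional singular-hyperbolic attractor is a homoclinic class, and in making sure the two periodic orbits can genuinely be taken \emph{distinct} and \emph{inside} $\Lambda$ with the required transversality; this is where one must lean on \cite{AP} and on the accumulation of transverse homoclinic points by periodic orbits of the same index. One should also be slightly careful that a singular-hyperbolic attractor contains an equilibrium (a Lorenz-like singularity), so the homoclinic class is that of a periodic orbit rather than of the singularity itself; this is precisely what the results in \cite{AP} provide, and it is the only point where genuine dynamical (as opposed to soft) input is needed.
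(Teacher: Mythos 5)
Your proposal is correct and follows essentially the same route as the paper: invoke \cite{AP} to identify the three-dimensional singular-hyperbolic attractor with a non-trivial homoclinic class, extract two homoclinically related periodic orbits whose stable manifolds are dense by the $\lambda$-lemma, observe that these stable manifolds lie in the ergodic basins of the two periodic-orbit measures, and apply Corollary~\ref{main-f}. The only cosmetic slip is calling the invariant measure on a periodic orbit the normalized arc-length measure --- it is the normalized time-parametrized measure $\frac1T\int_0^T\delta_{X_s(p_i)}\,ds$ --- which does not affect the argument.
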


It is well known that geometric Lorenz attractors are homoclinic classes. In particular, this improves \cite{KLS} where the authors prove that the set of points with historic behavior for the geometric Lorenz attractor is residual in a trapping region of the attractor.
We also observe that a similar statement holds for multidimensional singular-hyperbolic attractors of $C^2$ vector fields,
extending the recent result of D. Yang in \cite{Y2020}. Actually, by \cite[Theorem B]{CY}, for a $C^1$ open and dense set of vector fields their singular-hyperbolic Lyapunov stable chain-recurrence class is a homoclinic class.

%We remark that, since every singular-hyperbolic attractor in dimension three is a homoclinic class (cf. \cite{AP}), Corollary~\ref{main-f} implies that the set of points with historic behavior is globally residual for every three-dimensional singular-hyperbolic attractor of a $C^1$ flow. In particular, this improves \cite{KLS} where the authors prove that the set of points with historic behavior for the geometric Lorenz attractor is residual in a trapping region of the attractor.
%
%\smallskip
%
%We also observe that Corollary~\ref{main-f} extends the recent result of D. Yang in \cite{Y2020} on multidimensional singular-hyperbolic attractors of $C^2$ vector fields. Indeed, Corollary~\ref{main-f} comprises $C^1$ vector fields, and concerns the attractors themselves instead of just their basins of attraction. Actually, by \cite[Theorem B]{CY}, for a $C^1$ open and dense set of vector fields their singular-hyperbolic Lyapunov stable chain-recurrence class is a homoclinic class. Thus, by Corollary~\ref{main-f}, the set of points with historic behavior is actually residual on these classes.
%
%\textcolor{blue}{, as it becomes possible to consider $C^1$-vector fields. For instance, as Lorenz one-dimensional maps admit induced towers, the corresponding vector fields exhibit distinct full supported probability measures, and so Corollary~\ref{main-f} provides a Baire residual subset of irregular points.}

\subsection*{Acknowledgements}
MC and PV were partially supported by CMUP which is financed by national funds through FCT - Funda\c c\~ao para a Ci\^encia e a Tecnologia, I.P., under the project with reference UIDB/00144/2020. PV also acknowledges financial support from the Project `New trends in Lyapunov exponents' (PTDC/MAT-PUR/29126/2017) and from Funda\c c\~ao para a Ci\^encia e Tecnologia (FCT) - Portugal through the grant CEECIND/03721/2017 of the Stimulus of
Scientific Employment, Individual Support 2017 Call.

\smallskip

\end{document}